\newtheorem {theorem}                   {Theorem}
\newtheorem* {theorem*}                   {Theorem}
\newtheorem {lemma}{Lemma}
\newtheorem {proposition}     {Proposition}
\newtheorem* {prop*}     {Proposition}
\newtheorem {corollary}      {Corollary}
\theoremstyle{definition}
\newtheorem {definition}[equation]{Definition}
\newtheorem {remark}        {Remark}
\newtheorem {example}         {Example}
\newcommand{\pr} {\smallskip\noindent{\bf Proof\,\,}}
\def\R{\mathbb{R}}
\newcommand{\pp}[2]{\frac{\partial#1}{\partial#2}}
\newcommand{\bigslant}[2]{{\raisebox{.2em}{$#1$}\left/\raisebox{-.2em}{$#2$}\right.}}
\newcommand{\tx}{\tilde {X}}
\newcommand{\ta}{\tilde {\alpha}}
\title{The topology of Bott integrable fluids}
\author{Robert Cardona}\address{ Robert Cardona, Institut de Recherche Math\'ematique Avanc\'ee, Universit\'e de Strasbourg, 7 rue Ren\'e Descartes. 67084 Strasbourg, France. \it{e-mail: robert.cardona@math.unistra.fr }
 }
 \thanks{The author acknowledges financial support from the Spanish Ministry of Economy and Competitiveness, through the Mar\'ia de Maeztu Programme for Units of Excellence in R\&D (MDM-2014-0445) via an FPI grant. The author is also supported by the AEI grant PID2019-103849GB-I00/AEI/10.13039/501100011033, and AGAUR grant 2017SGR932.}
\begin{document}

\begin{abstract}
We construct non-vanishing steady solutions to the Euler equations (for some metric) with analytic Bernoulli function in each three-manifold where they can exist: graph manifolds. Using the theory of integrable systems, any admissible Morse-Bott function can be realized as the Bernoulli function of some non-vanishing steady Euler flow. This can be interpreted as an inverse problem to Arnold's structure theorem and yields as a corollary the topological classification of such solutions. Finally, we prove that the topological obstruction holds without the non-vanishing assumption: steady Euler flows with a Morse-Bott Bernoulli function only exist on graph three-manifolds.
\end{abstract}

\maketitle

\section{Introduction}

The evolution of the velocity field $X$ of an ideal fluid on a Riemannian three-manifold $(M,g)$ is modeled by the Euler equations:
\begin{equation*}
\begin{cases}
\partial_t X + \nabla_X X &= -\nabla p,\\
\operatorname{div}X=0,
\end{cases}
\end{equation*}
where the differential operators are taken with respect to the metric $g$, and the scalar function $p \in C^{\infty}(M)$ denotes the inner pressure. Finding stationary solutions for a fixed metric can be challenging, and requires analytic tools for the study of partial differential equations \cite{EP}. Another approach, which is more flexible and permits the use of tools from dynamical systems and differential geometry, is to study flows that are stationary solutions for some metric. In other words, the ambient Riemannian metric is a new variable of the equations.

In the analysis of stationary solutions, Arnold's celebrated structure theorem \cite{Arn} inaugurated the modern field of topological hydrodynamics \cite{AKh}. For closed manifolds, it provides an almost complete description of the rigid behavior of the flow if the Bernoulli function, which depends on the pressure function, is non-constant and analytic (or $C^2$ Morse-Bott). Except on an analytic stratified subset of positive codimension, the manifold is trivially fibered by invariant tori where the flow is conjugate to a linear field. However, Arnold's theorem is an \textit{a posteriori} conclusion: it gives the structure of such solutions but says nothing about their existence. Although it is possible to construct some explicit examples, there is no general construction of steady flows under the hypotheses of Arnold's structure theorem. Natural questions in this context arise: given a function that matches Arnold's description, is it the Bernoulli function of some stationary Euler flow? This ``converse problem" to Arnold's theorem was already posed by Dennis Sullivan in a series of lectures at CUNY in 1994. In the same spirit, Peralta-Salas proposed in \cite{P} to analyze whether the ambient topology of the manifold can be an obstruction to the existence of integrable steady Euler flows. We will address these two problems in the non-degenerate case of Morse-Bott functions.

 With the assumption that the vector field $X$ is non-vanishing, it was shown in \cite{CV} that the ambient three-manifold cannot be arbitrary: it is necessarily a graph manifold. This obstruction is not inherent to stationary Euler flows \cite{EG0}, and hold in fact in a more general setting \cite{FM}: a non-singular flow with a stratified first integral only exists on a graph manifold. We will refer to a non-vanishing solution to the Euler equations for some metric and a non-constant analytic (or Morse-Bott) Bernoulli function as an \textbf{Arnold flow}. 

In the first part of this work, we show that any graph manifold admits an Arnold flow with an analytic Bernoulli function.
\begin{theorem}\label{main}
Any closed, oriented graph three-manifold $M$ admits a non-vanishing steady solution to the Euler equations for some metric and non-constant analytic Bernoulli function.
\end{theorem}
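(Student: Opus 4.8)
The plan is to equip an arbitrary closed oriented graph manifold $M$ with a Riemannian metric $g$, a non-vanishing vector field $X$ and a non-constant analytic function $B$ such that, writing $\alpha=g(X,\cdot)$, one has $\iota_X d\alpha=-dB$ and $\operatorname{div}_g X=0$; by the usual reformulation of the stationary Euler equations (with $B=p+\tfrac12|X|_g^2$) this is precisely what has to be produced. I will extract the triple from an integrable-systems structure adapted to the graph decomposition of $M$, constructing first a smooth Morse--Bott model, then the metric, and finally upgrading everything to the analytic category.

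\emph{Step 1: the topological model.} Cut $M$ along its canonical family of disjoint incompressible tori into Seifert fibred pieces $M_1,\dots,M_k$. On each $M_i$ take a non-vanishing vector field $X_i$ tangent to the Seifert fibration — the infinitesimal generator of the locally free $S^1$-action, which never vanishes, even along exceptional fibres — and let $f_i$ be the pull-back to $M_i$ of a Morse function on the base orbifold $\Sigma_i$, chosen so that near $\partial M_i$ it depends only on a collar parameter. Then the regular level sets of $f_i$ in the interior of $M_i$ are $2$-tori carrying a linear flow, and its critical set consists of fibres, i.e.\ circles. Over a collar $T^2\times[-1,1]$ of each gluing torus one interpolates: $f$ is a function of the collar parameter only, and $X$ is a path of non-vanishing linear vector fields on $T^2$ joining the Seifert direction of one side to that of the other side transported through the gluing diffeomorphism. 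The result is a (smooth) ``admissible'' Morse--Bott function $f$ on $M$ — regular levels $2$-tori, critical set a union of circles (and possibly tori) — together with a non-vanishing $X$ tangent to the levels of $f$, linear on each regular torus, and preserving a volume form $\mu$. Equivalently, this realizes $M$ as a regular isoenergy surface of a Bott-integrable Hamiltonian system with two degrees of freedom, with $f$ the restriction of the second integral and $X$ that of the Hamiltonian field; one may instead invoke the Fomenko--Zieschang realization theory at this point.

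\emph{Step 2: building the Euler metric.} Given $(M,X,f,\mu)$, I construct $g$ solving $\iota_X d\alpha=-df$ and $\operatorname{div}_g X=0$. In a regular collar $T^2\times(a,b)$ with coordinates $(x,y,s)$, where $f=f(s)$ and $X=p(s)\,\partial_x+q(s)\,\partial_y$ is non-vanishing, put $\alpha=f_1(s)\,dx+f_2(s)\,dy$ with $f_1'p+f_2'q=f'$ and $f_1p+f_2q>0$; then $d\alpha$ is closed and exact and $\iota_X d\alpha=-f'\,ds=-df$, and one completes $\alpha$ to a metric with $g(X,\cdot)=\alpha$ and volume form $\mu$ — possible since $X$ is non-vanishing, $\alpha(X)>0$, and $\mu$ is prescribed. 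Near a critical circle $\gamma$ this is replaced by an explicit model on a normal-form neighbourhood of $\gamma$ adapted to the Bott normal form of $f$. The point is then to glue these local solutions globally: one fixes a single closed $2$-form $\omega$ on $M$ with $\iota_X\omega=-df$ lying in the zero class of $H^2(M;\R)$ — the relevant zero-flux/homology compatibility between $[\iota_X\mu]$ and $df$ holds by the symmetric way the structure was built in Step 1 — takes a global primitive $\alpha$ of $\omega$, adds a closed $1$-form so that $\alpha(X)>0$ everywhere, and builds $g$ from $\alpha$, $X$ and $\mu$ as above.

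\emph{Step 3: analyticity, and the main obstacle.} All local models used above (linear fields on $T^2$, warped collar metrics, Seifert and Bott normal forms) are real-analytic, so the whole construction can be carried out in the analytic category by choosing analytic transition data; alternatively one approximates $f$ and $g$ by analytic objects, which preserves the Euler relations since these are closed first-order conditions stable under $C^\infty$-small perturbations that keep $f$ Morse--Bott. Setting $B=\phi\circ f$ for a suitable analytic reparametrization $\phi$ then gives a non-vanishing steady Euler flow with non-constant analytic Bernoulli function. The hard part is not the local realization on regular collars, which is elementary, but the simultaneous global construction in Step 1 together with the exactness requirement in Step 2: one must organize the Morse functions on the base orbifolds, the exceptional fibres and the non-fibre-preserving gluings so that the resulting $f$ is globally Morse--Bott with only toral regular levels, $X$ is globally non-vanishing and linear on them, and — crucially — the $2$-form recovering $-df$ from $X$ can be chosen exact, so that the integrable structure on $M$ can genuinely be ``Euler-ized''. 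Verifying this cohomological compatibility and handling the passage through the singular levels is where the real work lies.
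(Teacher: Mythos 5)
Your overall strategy coincides with the paper's: decompose $M$ into Seifert pieces, take the fibre direction as $X$ and the pullback of a Morse function on the base as the integral, interpolate over collars of the gluing tori, invoke the characterization of Eulerisable fields (the paper's Lemma \ref{eul}, from \cite{PRT}), and upgrade to analyticity at the end. But two steps that you yourself flag as ``where the real work lies'' are genuine gaps, and the way you propose to close them would not work. First, in Step 2 you try to produce $\alpha$ by choosing a closed $2$-form $\omega$ with $\iota_X\omega=-df$ and $[\omega]=0$, taking a primitive, and then ``adding a closed $1$-form so that $\alpha(X)>0$ everywhere.'' Neither the exactness of $\omega$ nor the existence of such a correcting closed $1$-form is established, and the latter is not automatic: the closed $1$-forms available are constrained by $H^1(M;\R)$, and there is no reason one of them should dominate the (sign-changing, a priori) function $\alpha(X)$ globally. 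The paper avoids this entirely by never passing through a cohomological condition: it builds the $1$-form $\alpha$ directly, block by block, and the content of its Interpolation Lemma (Lemma \ref{int}) is precisely a seven-stage deformation of the pair $(X,\alpha)$ across each collar that keeps $\alpha(X)>0$ at every stage \emph{and} keeps the resulting Bernoulli function free of critical points there, so that $d\alpha$ is exact by construction and no uncontrolled critical set appears. Your collar ansatz $\alpha=f_1(s)\,dx+f_2(s)\,dy$ with $f_1'p+f_2'q=f'$ and $f_1p+f_2q>0$ is the right local equation, but you do not show that it can be solved with the prescribed boundary values on \emph{both} ends of the collar (the two Seifert models transported through a Dehn twist) while maintaining positivity; that is exactly what the paper's lemma proves and what cannot be waved away.

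Second, the analyticity step is wrong as stated. The relation $\iota_X d\alpha=-dB$ is an exact identity, not an open condition, so it is not ``stable under $C^\infty$-small perturbations''; approximating $f$ and $g$ by analytic objects destroys the equation. Nor can the construction be carried out entirely in the analytic category, since the collar interpolations necessarily use cutoff functions. The paper's resolution is different and essential: the constructed $B$ is Morse--Bott with only regular points, nondegenerate quadratic singularities along circles/tori/Klein bottles, or points where it is a constant plus a power of a regular function, and Shiota's equivalence theorem (\cite[Theorem 7.1]{Sh}) then says such a smooth function is equivalent, via a diffeomorphism, to an analytic one; pushing the whole structure forward by that diffeomorphism yields the analytic Bernoulli function. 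Without an argument of this type your Step 3 does not deliver analyticity. Finally, note that the paper also treats the exceptional fibres (solid tori attached by Dehn surgery, with $B=\varepsilon+r^2$ near the core) and the twisted $I$-bundles over Klein bottles explicitly, and that the graph-manifold gluing introduces a new nondegenerate critical torus $\{t=0\}$ with $B=t^2$ at each JSJ torus; your sketch leaves the behaviour of $f$ at the orbifold points and at the non-fibre-preserving gluings unspecified, which is precisely where the admissibility of the resulting Morse--Bott function has to be checked.
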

From the discussion above, we know that the converse also holds and we obtain an equivalence. We prove this using the standard decomposition of Seifert manifolds. However, using arguments from the proof of the theorem above, one can use another decomposition of graph manifolds developed by Fomenko et alli to obtain a wealthier source of examples of Arnold flows. A key point of the theory is to consider first integrals of Morse-Bott type, a common assumption in the theory of Hamiltonian integrable systems \cite{F1}. In particular, we can construct an Arnold flow realizing as Bernoulli function, up to diffeomorphism, any possible Morse-Bott function. In the language developed in \cite{F1} for Hamiltonian integrable systems, the topological characterization of a graph three-manifold and the foliation by level sets of a Bott function is given by a molecule with gluing matrices. Reformulated in a different language, we will say that a Morse-Bott function is ``admissible" if it is not topologically obstructed to be the first integral of a non-vanishing volume-preserving flow.

\begin{theorem}\label{ArnTop}
Given an admissible Morse-Bott function $B$ on a graph manifold $M$, there exists a metric and a steady solution to the Euler equations with Bernoulli function $B$ (modulo diffeomorphism of $M$). The induced Riemannian volume can be chosen to be any arbitrarily fixed volume form.
\end{theorem}
This theorem shows that when we allow the metric to vary, the apparent difficulty to construct fluid flows with such a rigid behavior is overcome, and one can realize flows in every possible topological configuration. An immediate corollary is that the invariants developed for Bott integrable systems give a topological classification of Arnold flows with Morse-Bott Bernoulli function. This classification can be compared with the classification of vorticity functions of Morse type studied in \cite{IK} for the 2D Euler equations in surfaces, which was further extended to surfaces with boundary \cite{K}.

 The resemblance between Arnold's structure theorem and Arnold-Liouville's theorem for integrable Hamiltonian systems already suggests that there might be some connection between these two worlds. By taking a careful look at the Arnold flows constructed in Theorem \ref{ArnTop}, we can relate them to Bott integrable systems through the symplectization of an appropriate stable Hamiltonian structure (confer Theorem \ref{maincor}). However, the construction is \emph{ad hoc} and we can find examples of Arnold flows that cannot be thought, together with their curl, as a Hamiltonian integrable system. This yields an alternative proof that any molecule with gluing matrices can be realized by an integrable system, with the additional property that the isoenergy hypersurface is of stable Hamiltonian type. Without this extra property, this was originally proved in \cite{BFM}.\\

 In the last part of this work, we drop the assumption that the solution is non-vanishing, and analyze in general steady Euler flows with a Morse-Bott Bernoulli function. Without the non-vanishing assumption, no obstruction on the topology of $M$ was previously known. We show that the Bernoulli function cannot have a non-degenerate critical point (independently of the Riemannian metric), and use it to generalize the obstructions to the existence of any steady solution with a Morse-Bott Bernoulli function. As mentioned earlier, this was only known with the assumption that the flow has no stagnation points.
 
 \begin{theorem}\label{thm:sing}
Let $M$ be a three-manifold that is not of graph type. Then $M$ does not admit a stationary solution to the Euler equations, for any metric, with a Morse-Bott Bernoulli function.
\end{theorem}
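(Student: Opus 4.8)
The plan is to argue by contradiction, the key point being the local statement announced just before the theorem: a Morse-Bott Bernoulli function of a steady Euler flow on a three-manifold has no non-degenerate critical points. Write $u=X^\flat$ for the velocity one-form, so that the stationary Euler equations for the metric $g$ amount to $\iota_X du=-dB$ together with $\iota_X\mu$ closed ($\mu$ the volume form); only the first relation enters the lemma. Suppose $p$ is a non-degenerate critical point of $B$. First I would rule out $X(p)\neq 0$: in a flow-box chart $(t,y,z)$ near $p$ with $X=\partial_t$, the identity $X(B)=0$ forces $B$ to be independent of $t$, so $\operatorname{Hess}_pB$ annihilates $\partial_t$ and is degenerate, a contradiction. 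Hence $X(p)=0$, and I would linearise $\iota_X du=-dB$ at $p$: collecting the terms linear in the displacement $w\in T_pM$ gives the identity $du_p(DX_p\,w,v)=-\operatorname{Hess}_pB(w,v)$ for all $v,w\in T_pM$. Non-degeneracy of $\operatorname{Hess}_pB$ makes $w\mapsto -\operatorname{Hess}_pB(w,\cdot)$ an isomorphism onto $T_p^*M$, whereas the left-hand side factors through the contraction $v'\mapsto\iota_{v'}du_p$, whose image has dimension at most $\operatorname{rank} du_p\le 2$ since $du_p$ is a two-form on a three-dimensional space. This contradiction proves the lemma.

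Granting this, suppose $M$ is closed and carries a steady Euler flow $X$, for some metric, with Morse-Bott non-constant Bernoulli function $B$ (the non-constant hypothesis being essential: a constant $B$ gives a Beltrami field, for which no such obstruction exists \cite{EG}). Since $X=0$ implies $dB=-\iota_X du=0$, the zero set of $X$ lies inside the critical set of $B$; hence $X$ is non-vanishing on every regular level set of $B$, and by Arnold's structure theorem \cite{AKh} each connected component of a regular level set is a $2$-torus. By the lemma the critical set of $B$ has no isolated points, so its components are closed curves and closed surfaces. For a surface component $\Sigma$, the Morse-Bott normal form of $B$ shows that a regular level set close to the critical value and on the appropriate side is, near $\Sigma$, either two parallel copies of $\Sigma$ or a connected double cover of $\Sigma$, according to whether the normal line bundle of $\Sigma$ is trivial or not; since this surface is a union of tori, $\Sigma$ must be a $2$-torus or a Klein bottle. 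Consequently the critical submanifolds of $B$ are circles, $2$-tori and Klein bottles, their tubular neighbourhoods are solid tori or $I$-bundles over a torus or a Klein bottle (Seifert pieces), and the complement of these neighbourhoods is a union of pieces $T^2\times I$ fibred by the tori $\{B=c\}$.

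Gluing the Seifert pieces to the $T^2\times I$ pieces along their torus boundaries exhibits $M$ as a graph manifold; equivalently, $(M,X,B)$ realises a molecule with gluing matrices whose underlying manifold is always a graph manifold (see \cite{F1}). This contradicts the hypothesis that $M$ is not of graph type, proving the theorem. The genuinely new ingredient is the lemma, but its proof is short once the linearisation is correctly set up; I expect the main work to lie in the second step -- the Morse-Bott bookkeeping around the critical set (checking that $X$ vanishes only along critical submanifolds, that the normal model of $B$ pins a critical surface down to a torus or Klein bottle, including the one-sided non-orientable case, and that the saddle-type critical circles carry the claimed Seifert neighbourhoods) and the invocation of the topological classification of Bott-integrable systems to assemble the pieces into a graph manifold.
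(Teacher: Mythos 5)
Your proof is correct and shares the paper's two-step architecture --- first rule out non-degenerate critical points of $B$, then use the resulting tame/Bott structure of the critical set to force $M$ to be a graph manifold --- but your proof of the key lemma is genuinely different. The paper argues by cases: a definite critical point would produce spherical regular level sets, contradicting Arnold's structure theorem, while a saddle is excluded by a Taylor-expansion count showing $d\alpha|_p\neq 0$, so that the vorticity $Y$ (defined by $\iota_Y\mu=d\alpha$) is non-zero at $p$, contradicting the fact that $B$ is also a first integral of $Y$. You instead note $X(p)=0$, linearise $\iota_Xd\alpha=-dB$ at $p$, and compare ranks: the map $w\mapsto \iota_{DX_pw}(d\alpha)_p$ has image of dimension at most $2$, since a two-form on a three-dimensional space is degenerate, whereas $w\mapsto-\operatorname{Hess}_pB(w,\cdot)$ is an isomorphism onto $T_p^*M$. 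This is shorter, uniform in the index of the critical point, and avoids the vorticity altogether. Two further remarks: your observation that $B$ must be assumed non-constant (else Beltrami fields give counterexamples) is a clarification the paper leaves implicit; and in the global step the Seifert piece attached to a saddle-type critical circle must be a neighbourhood of the entire singular fibre (an atom $N^2\times S^1$ or its twisted version), not merely a tubular neighbourhood of the circle, since otherwise the complement is not a union of $T^2\times I$ pieces --- the paper sidesteps this bookkeeping by citing the Fomenko--Matveev tame-function theorem directly, which is exactly the classification you invoke at the end, so your argument stands.
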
  
For example, the class of hyperbolic manifolds satisfies the hypotheses of this theorem. This is the first example of a topological obstruction to the existence of an integrable fluid flow, and answers the question raised in \cite{P} in the Morse-Bott case.
 
 \textbf{Organization of this article.} In Section \ref{sec:pre} we recall the necessary definitions and results concerning the Euler equations in Riemannian manifolds and the construction of Seifert manifolds. In Section \ref{sec:arn}, we use the topological decomposition of Seifert manifolds to prove Theorem \ref{main}. In Section \ref{sec:fom}, we combine the techniques introduced in the previous section with the theory of Bott integrable systems developed by Fomenko et alli. We obtain both Theorem \ref{ArnTop} and \ref{maincor}, with the simplifying assumption that the Morse-Bott function has only simple atoms. In Section 5, we analyze Euler flows with a Morse-Bott Bernoulli function that can have stagnation points and obtain Theorem \ref{thm:sing}. In the Appendix, we cover the construction of Section $4$ for general $3$-atoms and discuss the topological classification of Arnold flows with Morse-Bott Bernoulli function.

 \section{Preliminaries}\label{sec:pre}

In this section, we introduce the basic background in the Euler equations and Seifert manifolds, required to prove Theorem \ref{main}. We will eventually introduce other preliminaries when required through the other sections.

\subsection{Steady Euler equations and structure theorem}
When the solution does not depend on time, we can rewrite the Euler equations in terms of the dual one-form $\alpha=g(X,\cdot)$ and the so-called Bernoulli function. The equations are then 
\begin{equation*}
\begin{cases}
\iota_X d\alpha=-dB, \\
d\iota_X\mu=0,
\end{cases}
\end{equation*}
where $B=p+\frac{1}{2}g(X,X)$ is the Bernoulli function and $\mu$ is the induced Riemannian volume. A key property of this function is that it is an integral of the field $X$. We aim to study vector fields that are solutions to such equations for \textit{some} metric. The vorticity field $Y$ of $X$, which also corresponds to the curl of $X$, is defined by the equation $\iota_Y\mu=d\alpha$.
\begin{definition}
Let $M$ be a three-dimensional manifold with a volume form $\mu$. A volume preserving vector field $X$ is \textbf{Eulerisable} if there is a metric $g$ on $M$ for which $X$ satisfies the Euler equations for some Bernoulli function $B:M \rightarrow \mathbb{R}$.
\end{definition}
We might refer to such vector fields as Euler flows too, and we note that the same definition makes sense on a manifold of any dimension (and the analogue of the Euler equations there). In \cite{PRT}, Eulerisable flows are characterized, and the following lemma is instrumental in such characterization.
\begin{lemma}\label{eul}
Let $X$ be a non-vanishing volume preserving vector field in $M$. Then there exists a one-form $\alpha$ such that $\alpha(X)>0$ and $\iota_Xd\alpha=-dB$ for some function $B\in C^\infty(M)$ if and only if $X$ satisfies the Euler equations for some metric $g$ satisfying $g(X,\cdot)=\alpha$.
\end{lemma}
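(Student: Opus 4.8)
The plan is to prove the two implications separately, with essentially all of the work in the forward direction. The reverse implication is immediate from the definitions: if $X$ solves the Euler equations for a metric $g$ with $g(X,\cdot)=\alpha$, then by the stationary reformulation recalled above we have $\iota_X d\alpha=-dB$ for the Bernoulli function $B=p+\tfrac12 g(X,X)$, and moreover $\alpha(X)=g(X,X)=|X|_g^2>0$ since $X$ is non-vanishing. So in that direction there is nothing to do beyond unwinding definitions.

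For the forward implication, suppose we are given $\alpha$ with $\alpha(X)>0$ and $\iota_X d\alpha=-dB$. Since $X$ is volume preserving, fix a volume form $\mu_0$ with $\mathcal{L}_X\mu_0=0$ (so that $M$ is oriented, and we orient everything compatibly). The idea is to build a metric $g$ by declaring the splitting $TM=\mathbb{R}X\oplus\ker\alpha$ to be $g$-orthogonal, setting $g(X,X)=\alpha(X)$ on the line field (legitimate because $\alpha(X)>0$), and choosing a fiberwise metric $h$ on the $2$-plane field $\ker\alpha$. For any such $h$ one checks at once that $g(X,\cdot)=\alpha$: both sides vanish on $\ker\alpha$, and both take the value $\alpha(X)$ on $X$. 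Hence the only remaining freedom, and the only condition still to arrange, is the Riemannian volume $\mu_g$: we need it to be $X$-invariant so that the divergence-free equation $d\iota_X\mu_g=0$ holds.

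The key point is the computation that, in a $g$-orthonormal frame adapted to the splitting, $\mu_g=\tfrac{1}{\sqrt{\alpha(X)}}\,\alpha\wedge\Omega$, where $\Omega$ is the area form of $(\ker\alpha,h)$; so $\mu_g$ is determined by $\Omega$, and $\Omega$ can be taken to be any positively oriented area form on the bundle $\ker\alpha$. I would set $\Omega:=\tfrac{1}{\sqrt{\alpha(X)}}\,\iota_X\mu_0$: its restriction to $\ker\alpha$ is non-degenerate because $\mu_0$ is a volume form and $X\notin\ker\alpha$, and after fixing orientations it is positive, hence realized by some fiberwise metric $h$ (a standard rescaling of any background bundle metric). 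With this choice, using $\alpha\wedge\iota_X\mu_0=(\iota_X\alpha)\mu_0=\alpha(X)\mu_0$, one obtains $\mu_g=\mu_0$. Therefore $d\iota_X\mu_g=d\iota_X\mu_0=\mathcal{L}_X\mu_0=0$, while $\iota_X d\alpha=-dB$ by hypothesis, so $X$ satisfies the stationary Euler equations for $g$ with Bernoulli function $B$ and pressure $p=B-\tfrac12\alpha(X)$.

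The step I expect to be the only genuine subtlety is this simultaneous prescription of the metric dual $\alpha$ of $X$ \emph{and} of the Riemannian volume form: one must verify that the $2$-form which is forced to play the role of the area form on the transverse plane field is actually positive-definite there (this is exactly where $\alpha(X)>0$, i.e.\ transversality of $\ker\alpha$ to $X$, is used), and one must track orientations carefully so that $\mu_g$ and $\mu_0$ coincide rather than differ by a sign. Everything else — the partition-of-unity existence of a bundle metric and the short frame computation for $\mu_g$ — is routine.
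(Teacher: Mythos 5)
Your proof is correct. The paper does not actually prove this lemma --- it imports it from \cite{PRT} --- and your argument (declare $\ker\alpha\perp X$, set $|X|_g^2=\alpha(X)$, and choose the fiberwise metric on $\ker\alpha$ so that its area form is $\tfrac{1}{\sqrt{\alpha(X)}}\iota_X\mu_0$, whence $\mu_g=\tfrac{1}{\alpha(X)}\alpha\wedge\iota_X\mu_0=\mu_0$) is precisely the standard construction used there; the key identity $\alpha\wedge\iota_X\mu_0=\alpha(X)\,\mu_0$ and the positivity/orientation checks are all in order.
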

When the Bernoulli function is assumed to be analytic, Arnold \cite{Arn} introduced a dichotomy depending on whether $B$ is constant or not. When the Bernoulli function is non-constant, the flow has a rigid structure that is reminiscent of integrable Hamiltonian systems in symplectic geometry, due to the fact that the velocity field and the vorticity field commute and are linearly independent almost everywhere. When the Bernoulli function is constant and Arnold's theorem does not apply, the solutions are Beltrami fields: vector fields parallel to their curl. 

The existence of non-vanishing Beltrami fields for some metric has been extensively studied. For instance, it was proved in \cite{EG} that solutions of this type exist in every homotopy class of non-vanishing vector fields (each solution for some particular metric) of any three-manifold. This result was recently extended to every odd-dimensional manifold \cite{C1}. Beltrami fields constitute a concrete class of stationary solutions, and their flexibility can be used to prove the existence of steady Euler flows with complex dynamical properties \cite{EP,CMPP}. The following statement is a version of Arnold's theorem in the context of closed manifolds. It holds also when the Bernoulli function is assumed to be $C^2$ Morse-Bott.
\begin{theorem}[Arnold's structure theorem]
Let $M$ be a closed three-manifold and $X$ a flow satisfying the Euler equations for some non-constant analytic Bernoulli function. Denote by $C$ the union of critical level sets of $B$, which is an analytic set of positive codimension. Then $M\setminus C$ consists of finitely many domains $M_i$ and each domain is trivially fibered by invariant tori of $X$, where the flow is conjugated to the linear flow.
\end{theorem}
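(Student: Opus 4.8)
The plan is to read off from the stationary equations that $X$ carries, on every regular level set of $B$, exactly the structure of an integrable system, and then to use real-analyticity to see that only finitely many ``singular'' level sets intervene.

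\emph{Algebraic input.} Let $\omega$ be the vorticity, defined by $\iota_\omega\mu=d\alpha$. From $\iota_Xd\alpha=-dB$ one reads off $\iota_XdB=0$, and contracting $dB=-\iota_X\iota_\omega\mu$ with $\omega$ gives $\iota_\omega dB=0$; thus $X$ and $\omega$ are both tangent to every level set of $B$. Since $X$ is divergence free, $\mathcal{L}_X\mu=0$, and $\mathcal{L}_Xd\alpha=d\iota_Xd\alpha=-d^2B=0$, so $\iota_{[X,\omega]}\mu=\mathcal{L}_X(\iota_\omega\mu)-\iota_\omega\mathcal{L}_X\mu=\mathcal{L}_Xd\alpha=0$, whence $[X,\omega]=0$. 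Finally, if $X_p=0$ then $(dB)_p=-(\iota_Xd\alpha)_p=0$, so the zero set of $X$ lies in the critical set of $B$; in particular $X$ is nowhere vanishing on every regular level set.

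\emph{Global structure.} Because $B$ is analytic and $M$ is compact, $B$ has only finitely many critical values (a {\L}ojasiewicz-type fact; it is immediate in the $C^2$ Morse--Bott case, where the critical set is a compact submanifold with finitely many components). Hence $C$, the union of the corresponding level sets, is an analytic subset of codimension at least one, and $M\setminus C$ has finitely many connected components $M_i$. On each $M_i$, $B$ is a proper submersion onto an open interval $(a_i,b_i)$ --- its fibers are regular level sets, which are closed submanifolds of the compact manifold $M$ --- so by Ehresmann's theorem it is a locally trivial, hence trivial, fibration. Its fiber $S$ is a closed surface carrying the nowhere-vanishing field $X$, so $\chi(S)=0$, i.e. $S$ is a torus (a Klein bottle in the non-orientable case, which is not relevant here). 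Thus $M_i\cong\T^2\times(a_i,b_i)$, trivially fibered by the invariant tori $B^{-1}(c)$.

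\emph{Linearization on each torus.} Fix an invariant torus $T=B^{-1}(c)\subset M_i$. Restricting $\iota_Xd\alpha=-dB$ to $T$ gives $\iota_X(d\alpha|_T)=0$; as $d\alpha|_T$ is a $2$-form on the surface $T$ and $X\neq 0$, this forces $d(\alpha|_T)=0$, while $\alpha|_T$ is nowhere zero because $(\alpha|_T)(X)=g(X,X)>0$. Writing $\mu=dB\wedge\sigma$ near $T$ and using $\mathcal{L}_X\mu=0$ and $\mathcal{L}_XdB=0$, a short computation gives $\mathcal{L}_X(\sigma|_T)=0$, so $X$ preserves an area form $\sigma_T$ on $T$; then $\iota_X\sigma_T$ is a closed $1$-form, nowhere zero, and annihilating $X$. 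The two closed $1$-forms $\alpha|_T$ and $\iota_X\sigma_T$ are pointwise linearly independent (using $(\alpha|_T)(X)>0$ and $(\iota_X\sigma_T)(X)=0$), so integrating them yields angle coordinates on $T$ in which $X$ points along one coordinate direction; feeding in the commuting vorticity $\omega$ (which also preserves $\sigma_T$) one identifies the flow of $X$ with a translation flow. Carrying this out smoothly in $c$ over $M_i\cong\T^2\times(a_i,b_i)$ produces coordinates $(\varphi_1,\varphi_2,B)$ in which $X$ is tangent to $\{B=\mathrm{const}\}$ and linear there, which is the assertion.

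\emph{Main obstacle.} Real-analyticity is genuinely spent in the global step: without it, $C$ need not be tame and $M\setminus C$ could have infinitely many components, so the finiteness of the $M_i$ and the fact that $C$ is an analytic set of positive codimension is exactly where the hypothesis bites (this part is elementary under the $C^2$ Morse--Bott assumption). The conceptual core, though, is the torus step --- turning the pair $(X,\omega)$ of commuting divergence-free fields together with the nowhere-vanishing closed form $\alpha|_T$ into a bona fide linearization. In the analytic category this is clean; in the merely Morse--Bott category, when $g(X,X)$ is non-constant along a torus one only obtains a flow orbit-equivalent to a linear one, and it is in this sense that ``conjugated to the linear flow'' should be read.
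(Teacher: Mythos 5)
This is a background theorem that the paper does not prove; it is quoted from Arnold \cite{Arn} (see also \cite{AKh}), so your attempt can only be measured against the classical argument. Most of what you write is exactly that argument and is correct: the identities $\iota_XdB=\iota_\omega dB=0$ and $[X,\omega]=0$, the vanishing of $X$ only on $\mathrm{Crit}(B)$, the finiteness of the critical values of an analytic $B$ on a compact $M$, and the Ehresmann argument producing the trivial fibration of each $M_i$ by closed orientable surfaces of zero Euler characteristic, i.e.\ tori.

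The one genuine soft spot is the linearization on a fixed torus $T$. Your route via the two closed, pointwise independent $1$-forms $\alpha|_T$ and $\iota_X\sigma_T$ (besides glossing over the period-lattice argument needed to turn them into angle coordinates) only yields coordinates in which $X=g(X,X)\,\partial/\partial\varphi_1$, i.e.\ a flow \emph{orbit equivalent} to a linear one, and your closing remark wrongly suggests that this is all one can hope for when $g(X,X)$ is non-constant on $T$. The missing idea is the observation that on a regular level set $X$ and its curl $\omega$ are pointwise linearly independent: since $\iota_X\iota_\omega\mu=\iota_Xd\alpha=-dB$ and $dB\neq 0$ there, one has $\mu(\omega,X,\cdot)\neq 0$, hence $X\wedge\omega\neq 0$. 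The commuting pair $(X,\omega)$ then generates a locally free, transitive action of $\R^2$ on $T$, identifying $T$ with $\R^2/\Lambda$ in such a way that the flow of $X$ becomes a translation flow; this gives genuine conjugacy to a linear flow, and it works verbatim in the $C^2$ Morse--Bott case, so no weakening of the conclusion is needed in either category. You do invoke "feeding in the commuting vorticity", but without the independence statement that step does not go through (a priori $\omega$ could be parallel to $X$ along $T$), and it is precisely this one-line consequence of $\iota_Xd\alpha=-dB$ that makes Arnold's theorem work.
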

In \cite{CV} the authors study the case of non-vanishing solutions in the hypotheses of the structure theorem. It is proved that for an analytic Bernoulli function $B$, one can always find some other metric in $M$ such that $X$ is a solution to the Euler equations with a constant Bernoulli function. In particular, it is the Reeb field of some stable Hamiltonian structure. As a corollary of the methods in the proof, topological obstructions to the existence of solutions with non-constant analytic Bernoulli function are obtained. The manifold has to be a union of Seifert manifolds glued along their torus boundary components, i.e. a graph manifold. This result was obtained in \cite{EG0} in a more general context: it holds for the existence of a non-vanishing vector field (not necessarily a steady Euler flow) with a stratified integral. Those conclusions follow from previous work in \cite{FM}. 
\begin{theorem}[{\cite[Corollary 2.10]{CV}}, {\cite[Theorem 5.1]{EG0}}]\label{top}
If a three-manifold admits a non-vanishing steady solution to the Euler equations for some metric and non-constant analytic (or in general stratified) Bernoulli function, then the manifold is of graph type.
\end{theorem}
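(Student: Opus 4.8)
The plan is to feed Arnold's structure theorem into the topology of $M$ and read off a decomposition into Seifert pieces glued along tori. Recall that in the cited results $X$ is assumed nowhere vanishing and $B$ analytic; the ``stratified'' version is obtained in the same way, replacing the semianalytic stratification of the critical set used below by the given one, and it is what appears in \cite{FM} and \cite[Theorem 5.1]{EG0}, the non-vanishing analytic case being \cite[Corollary 2.10]{CV}.

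First I would isolate the building blocks. As $M$ is closed and $B$ analytic, $B$ has finitely many critical values $c_1<\dots<c_m$, and the critical set $C=B^{-1}\{c_1,\dots,c_m\}$ is a compact, $X$-invariant, semianalytic set of codimension at least one; since $X$ is nowhere zero it is tangent to $C$ and to each of its strata, and $C$ has no isolated points. By the structure theorem every component of $M\setminus C$ is trivially fibered by invariant $2$-tori on which $X$ is conjugate to a linear flow, and, $B$ being a regular first integral there, these tori are exactly the components of the regular level sets of $B$. Fix $\varepsilon>0$ so that each $[c_j-\varepsilon,c_j+\varepsilon]$ contains no other critical value and set $N_j=B^{-1}([c_j-\varepsilon,c_j+\varepsilon])$. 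By Ehresmann's fibration theorem the closure of every component of $M\setminus\bigcup_j\operatorname{int}N_j$ is the product of a disjoint union of $2$-tori with $[0,1]$, hence a union of copies of $T^2\times[0,1]$, which are trivially Seifert fibered. Thus $M$ is assembled from these products and the blocks $N_j$ by gluing along the regular tori $B^{-1}(c_j\pm\varepsilon)$.

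Second --- and this is the heart of the matter --- I would show that each block $N_j$ is a graph manifold with toral boundary. It deformation retracts onto the critical level set $C_j=B^{-1}(c_j)$, which I stratify analytically. Since $X$ is tangent and nowhere zero on $C_j$, there are no $0$-dimensional strata; a regular neighborhood of a $1$-dimensional stratum, which is a finite union of closed orbits of $X$, is a solid torus and hence trivially Seifert fibered; and the closure of a $2$-dimensional stratum $S$ is a compact surface on which $X$ is tangent and nowhere zero, so its Euler characteristic vanishes and $S$ is a $2$-torus, a Klein bottle, an annulus, or a M\"obius band. Using that the vanishing of $B-c_j$ transverse to $S$ is of finite order --- quadratic in the $C^2$ Morse-Bott case --- a regular neighborhood of $S$ in $N_j$ is a circle- or interval-bundle over $S$, in every case an orientable Seifert-fibered space with toral boundary (the twisted $I$-bundle over the Klein bottle included). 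Gluing these local models along their boundary tori according to the incidences of the stratification exhibits $N_j$ as a graph manifold.

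Finally, $M$ is a closed $3$-manifold built from Seifert-fibered spaces and graph manifolds by gluing along embedded $2$-tori, hence itself a graph manifold. I expect the second step to be the genuine obstacle: it requires reconciling the stratified geometry of $C$ with the dynamics of $X$ --- closed orbits along the $1$-dimensional strata, circle/interval-bundle structure along the $2$-dimensional strata --- and, above all, controlling the local normal form of $B$ along each critical level set. This is the single point at which the analytic (equivalently, $C^2$ Morse-Bott, or more generally stratified) hypothesis is indispensable; with no regularity assumption on $B$ there is no control on the critical set whatsoever.
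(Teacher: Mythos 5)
A point of order first: the paper does not prove Theorem~\ref{top}; it is imported wholesale from \cite{CV} and \cite{EG0} (the latter resting on \cite{FM}), so there is no internal proof to measure yours against. What you have written is a reconstruction of the external argument, and its architecture is the right one --- the Fomenko--Matveev ``tame integral'' scheme that underlies \cite{EG0}: cut $M$ along regular level tori (Arnold plus Ehresmann giving the $T^2\times[0,1]$ pieces), then show that a saturated neighborhood $N_j$ of each critical level is Seifert/graph by exploiting that the nowhere-zero field $X$ is tangent to every stratum of $B^{-1}(c_j)$. You are also right to restore the non-vanishing hypothesis, which the printed statement omits but which is essential (and is exactly what makes the paper's later Theorem~\ref{thm:sing} a new contribution). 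Note that \cite{CV} reaches the same conclusion by a different route, first modifying the metric so that $X$ becomes a Reeb field of a stable Hamiltonian structure.

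Two soft spots in your second step, which you correctly identify as the crux. First, the closure of a $2$-dimensional stratum is a union of strata and need not be an embedded compact surface: an annular stratum can have both ends limiting onto the same critical circle, and several strata accumulate on each circle (this already happens for the figure-eight-times-circle atom). The object to which Poincar\'e--Hopf applies is the abstract completion of the stratum, a compact surface with boundary carrying a nowhere-zero tangent field; that the strata are finite in number with finitely many ends is precisely the content of the tameness axioms of \cite{FM}, supplied by analyticity or by the Morse--Bott normal form. Second, assembling $N_j$ from regular neighborhoods of its strata ``glued along boundary tori'' does not quite work: the solid torus around a saddle circle meets the $I$-bundles over the adjacent separatrix annuli along \emph{vertical annuli} in its boundary torus, not along whole tori. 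The clean statement, and the one used in \cite{FM,EG0} (and mirrored in Section~\ref{sec:fom} and the Appendix of this paper), is that $N_j$ is globally a product or twisted product $P^2\times S^1$ over the $2$-atom, hence a single Seifert piece with toral boundary; the torus gluings then occur only along the regular levels $B^{-1}(c_j\pm\varepsilon)$, which is what exhibits $M$ as a graph manifold. With those two repairs your sketch is essentially the proof in \cite{EG0}.
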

 To simplify the notation, recall that we refer to a vector field which is a non-vanishing steady solution to the Euler equations for some metric and non-constant analytic Bernoulli function as an \textbf{Arnold flow}. This is reminiscent of the nomenclature introduced in \cite{KKP}, where one speaks of a divergence-free vector field which is \emph{Arnold integrable} when it is almost everywhere fibered by invariant tori.

\subsection{Seifert and graph manifolds}\label{Se}
Let us recall the definition and construction of Seifert manifolds, introduced and classified by Seifert \cite{Se}.

\begin{definition}
A \textbf{Seifert fiber space} is a three-manifold together with a decomposition as a disjoint union of circles. 
\end{definition}
Equivalently, a Seifert fiber space is given by a circle bundle over a two-dimensional orbifold. Denote by $\pi:M\rightarrow B$ the bundle map over the compact base $B$. When the manifold $M$ is oriented, Seifert fiberings are classified up to bundle equivalence by a set of invariants (up to some operations that yield isomorphic fiberings) of the following form:
$$ M= \{ g ; (\alpha_1,\beta_1),...,(\alpha_m,\beta_m)\}.$$
The only thing we will need in this work is the way to reconstruct the manifold $M$ from a given collection of invariants. The integer $g$ denotes the genus of the base space $B$, and $(\alpha_i,\beta_i)$ are pairs of relatively prime positive integers $0<\beta_i< \alpha_i$. The integer $g$ can be negative, and then $B$ is the connected sum of $g$ copies of $ \mathbb{RP}^2$. The integer $m$ represents the number of orbifold singularities. Given such a collection of invariants, there is a precise construction to obtain the manifold $M$ that we describe following \cite{JN}. 

The base space $B$ is either an orientable surface $\Sigma_g$ of genus $g$, or decomposes as $\Sigma_{g'}\# \mathbb{RP}^2$ or $\Sigma_{g'}\# \mathbb{RP}^2 \# \mathbb{RP}^2$ for some $g'$.  To simplify, denote in any case by $\Sigma_g$ the orientable surface with genus $g$ or $g'$ respectively in each case. Remove $m$ open disks of $\Sigma_g$, that we denote by $D_i, i=1,...,m$. If $B$ has  some non-orientable part, one or two additional open disks $\tilde D_1, \tilde D_2$ are removed depending on the amount of $\mathbb{RP}^2$ components that have to be summed to $\Sigma_g$ to obtain $B$. We will denote by $\Sigma_0$ the resulting surface with boundary.

Over $\Sigma_0$, consider the trivial $S^1$ bundle and denote it by $M_0= \Sigma_0 \times S^1$. At each component of the boundary, which is of the form $\partial D_i \times S^1$, we glue a solid torus $V_i= \overline{D^2} \times S^1$ by means of a Dehn surgery with coefficients $(\alpha_i, \beta_i)$. If $B$ is orientable, this concludes the construction: we obtain a closed three-manifold. If $B$ is non-orientable, we denote by $M_1$ the resulting three-manifold with boundary and we need to fill one or two boundary components $\partial \tilde D_1 \times S^1$ and $\partial \tilde D_2 \times S^1$ if there were respectively one or two copies of $\mathbb{RP}^2$ in the decomposition of $B$. To do so, consider the only orientable $S^1$-bundle over the M\"obius band: namely, the twisted $I$-bundle over the Klein bottle fibered meridianally. Denote two copies of such a space by $M_2$ and $M_3$. The torus boundary of both $M_2$ and $M_3$ can be framed longitudinally by a fiber and meridianally by a section to the bundle.  The boundary components $\partial \tilde D_1 \times S^1$ and $\partial \tilde D_2 \times S^1$ in $M_1$ can be framed also longitudinally by a fiber and meridianally by a section to the bundle. Then glue $M_2$ and $M_3$ respectively to the boundary components of $M_1$ according to the identity between the first homology classes represented by the given framings. This concludes the construction of $M$ in the most general case.

For a Seifert manifold with boundary, the base surface $B$ is a surface with boundary. The boundary of the Seifert manifold $M$ is then a collection of tori. 

\begin{definition}\label{def:graph}
A \textbf{graph manifold} is a manifold obtained by gluing Seifert manifolds with boundary along their torus boundary components.
\end{definition}
These manifolds were introduced and classified by Waldhausen \cite{W1,W2}. Observe that this is a larger class of manifolds since the gluing of the boundary tori might not match the fibering in each piece. Hence the resulting total space might not admit a foliation by circles.

\section{Arnold flows in Seifert and graph manifolds}
\label{sec:arn}
In this section, we will prove Theorem \ref{main}. To do so, we first show that any Seifert manifold $M$ admits an Arnold flow. Because of Lemma \ref{eul}, we only need to prove that there is vector field $X$ in $M$ and a one-form $\alpha$ such that
\begin{itemize}
\item $X$ is volume-preserving,
\item $\alpha(X)>0$,
\item $\iota_Xd\alpha=-dB$ for some analytical function $B$.
\end{itemize}
The strategy of the proof is to break the manifold $M$ into pieces according to the construction detailed above, construct a vector field and a one-form satisfying some conditions in each piece and glue them together in an appropriate way. Concretely, we will show that in a neighborhood of the gluing locus, one can interpolate between the Arnold flows in each piece to obtain a globally defined steady Euler flow. Once we have a globally defined pair $(X,\alpha)$, we show it is a steady Euler flow for some analytic Bernoulli function and hence an Arnold flow. Finally, we adapt the interpolation to glue Seifert manifolds along their boundary to deduce the general case of graph manifolds.

\subsection{Building pieces}\label{ssec:blocks}
Let $M$ be a three-dimensional manifold which is a Seifert fibered space. Hence $M$ is an $S^1$-fibration over a compact surface $B$. To cover the most general case, we shall assume that the base space decomposes as $\Sigma_g \# \mathbb{RP}^2 \# \mathbb{RP}^2$. As in Subsection \ref{Se}, the surface $\Sigma_0$ is an orientable surface of genus $g$ and obtained by removing some open disks $D_i, i=1,...,m$ and $\tilde D_i, i=1,2$ of $B$. It is a surface with boundary $M_0= \Sigma_0 \times S^1$. We think of $\Sigma_0$ as being embedded in $\R^3$ so that there is a naturally defined height $h$ function, whose minimum is equal to $1$ and is reached at the boundary components of $\Sigma_0$. This height function lifts trivially to $M_0$. See Figure \ref{fig:emb} for an example with genus $2$, two singular fibers and one $\mathbb{RP}^2$ component.
\begin{figure}[!ht] 

\begin{center}
\begin{tikzpicture}
     \node[anchor=south west,inner sep=0] at (0,0) {\includegraphics[scale=0.16]{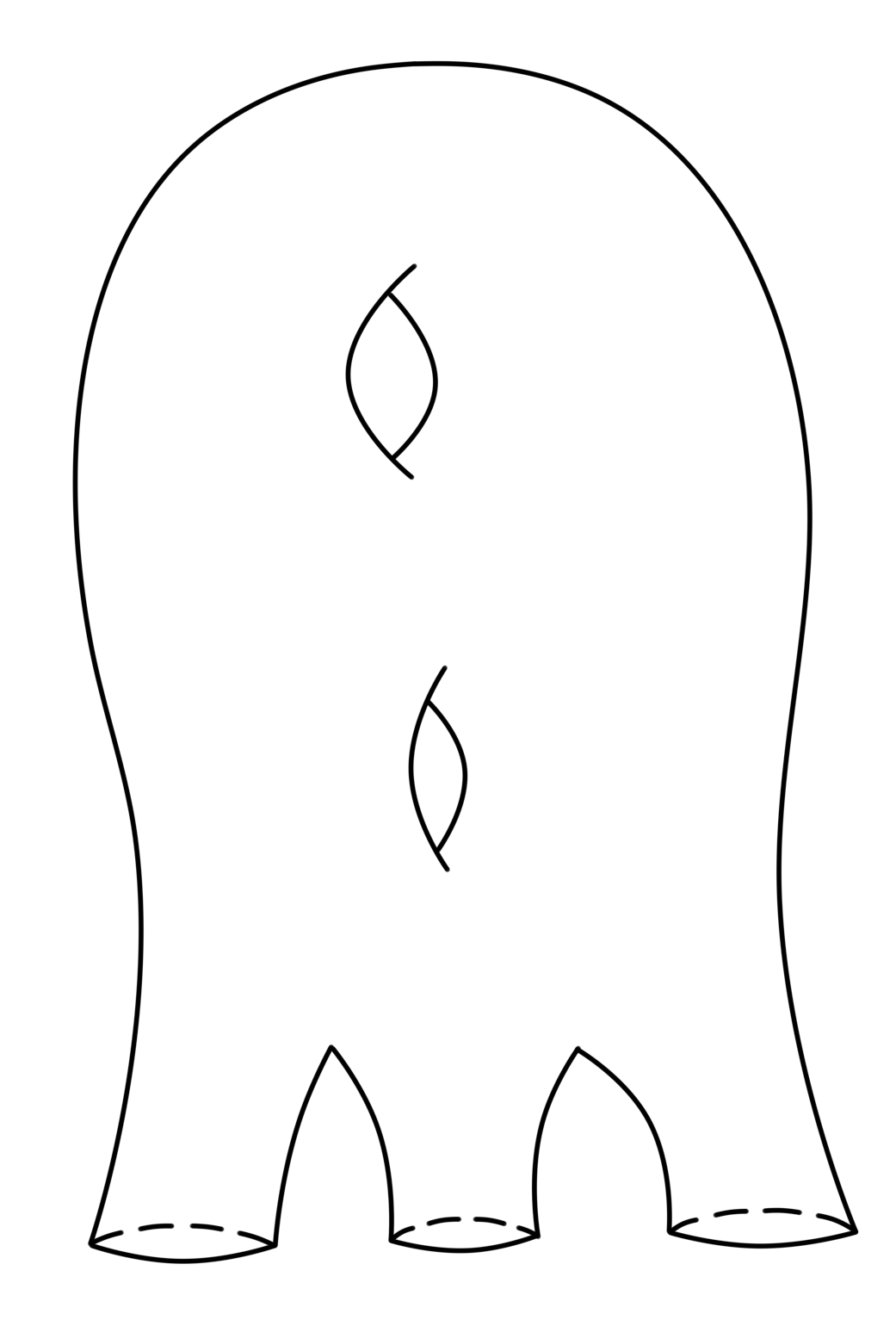}};
     
     \node at (4.5,5){$\Sigma_0$};
     \node at (0,0)[scale=0.8]{$\partial D_1$};
     \node at (2,0)[scale=0.8]{$\partial D_2$};
     \node at (4,0)[scale=0.8]{$\partial \tilde D_1$};
     \draw [->] (6,0) -- (6,6);
     \node at (6.5,3) {$h$};
     \node at (6.35,0.43) {$1$};
     \node at (6,0.43) {$-$};
\end{tikzpicture}
\caption{Height function in $\Sigma_0$}
\label{fig:emb}
\end{center}
\end{figure}

In $M_0$ denote by $\theta$ the coordinate in the $S^1$ component. Consider the vector field $ X= \pp{}{\theta}$, 
and the one-form $\alpha= hd\theta$. We have $\alpha(X)>0$ and $\iota_X d\alpha=-dh$.

On each solid torus $V_i \cong \bar D^2 \times S^1$, we take coordinates $(r,\varphi',\theta')$. Consider the vector field $Y= \pp{}{\theta'}$ and the one-form $\beta= v(r) d\theta'$, where $v'>0$ in $(0,1]$, close to $r=1$ the function is $r$ and close to $r=0$ the function is $\varepsilon + r^2$. We have $\beta(Y)>0$  and $\iota_Yd\beta= -v'(r)dr$. Hence taking as Bernoulli function $B=\int v'(r)dr$ such that $B(0)=0$, we have $\iota_Y d\beta=-dB$.

The remaining blocks $M_2$ and $M_3$ are twisted $I$-bundles over a Klein bottle. This space $M_2\cong M_3\cong K^2\tilde \times I$ can be seen as the mapping torus of
\begin{align*}
\phi: S^1\times [-1,1] &\longrightarrow S^1\times [-1,1] \\
 		(\theta,r)  &\longmapsto (-\theta, -r).
\end{align*} 
This means that there is a diffeomorphism
$$K^2 \tilde \times I \cong \bigslant{(S^1\times [-1,1])\times [0,1]}{ ((p,0)\sim (\phi(p),1))}.$$
This space is foliated by tori parallel to the boundary, and a copy of a Klein bottle at the core. If we denote by $r$ the coordinate in $[-1,1]$, the function $r^2$ is well defined in the quotient space. Furthermore, we have a naturally defined vector field $Z=\pp{}{\theta'}$, where $\theta'$ is in $[0,1]$, induced by the flow of the mapping torus and is $2$-periodic. Construct a function $v(r^2)$ such that close to $r=0$ it is equal to $\varepsilon + r^2$ and close to $r=1$ it is equal to $r^2$. By considering the one-form $\gamma=v(r)d\theta'$, we have $\gamma(Z)>0$ and $\iota_Y d\gamma =-dG$ where $G=\int v'(r)dr$ and $G(0)=0$. 

\subsection{Interpolation Lemma and gluing}

In order to reconstruct the whole manifold, we need first to glue back the sets $V_i= \overline{D_i}\times S^1$ to the boundary components $\partial D_i \times S^1$ of $M_0$ by means of a Dehn surgery operation. Let $U_i \cong [1,2) \times S^1 \times S^1 \subset M_0$ be a neighborhood of the boundary component $D_i \times S^1$. Fix $U$ to be one of these neighborhoods and endow it with coordinates $(t, \varphi, \theta)$. We can assume that the first component $t$ corresponds to the restriction of the height function $h$ to $U$, and that the coordinate $\theta$ is on the $S^1$ fiber of the trivial fibration in $M_0$. Denote by $\lambda= \{1\} \times \{\varphi_0\} \times S^1$ and by $\mu= \{1\} \times S^1 \times \{ \theta_0 \}$ the longitude and the meridian of the boundary component $\{1\} \times T^2$ of $U$. We will glue the solid torus $V= D^2 \times S^1$ (one of the $V_i$) to the boundary of $U$. Again take $\lambda_1= \{ \varphi'_0\in \partial {\bar D^2} \} \times S^1$ and $\mu_1= \partial {\bar D^2} \times \{\theta'_0\}$ the longitude and the meridian of $V$. The Dehn surgery is described by gluing in a way that 
\begin{align*}
\varphi:  \partial V_i &\longrightarrow \partial U_i \times S^1 \\
\mu_1 &\longrightarrow p\mu + q\lambda \\
\lambda_1 &\longrightarrow m \mu + n\lambda
\end{align*}
Coordinate wise we have $\varphi'=p\varphi+q\theta$ and $\theta'= m \varphi+ n\theta$. The coefficients $p,q$ are given by the invariants $\alpha_i, \beta_i$, and $m,n$ are any two integers such that $pn-qm=1$. We can assume that the surgery is such that the radial coordinate $r$ in $V$ is sent to $t$ in the neighborhood of the boundaries.  The vector field $Y=\pp{}{\theta'}$ which generates the longitude, is sent to $Y|_{\partial U}=  -q \pp{}{\varphi}+ p\pp{}{\theta}$ in the target coordinates, i.e. along the boundary of $U$. This can be easily deduced by the fact the coordinate wise we have 
\begin{equation*}
\begin{cases}
\varphi&= n \varphi' - q \theta' \\
\theta&= -m \varphi' + p\theta'
\end{cases}.
\end{equation*} 
Analogously, the one-form $\beta$ is sent to $\beta|_{\partial  U}= t(m d\varphi + nd\theta)$. This follows from the fact that near the boundary of $B$, the one-form is $\beta= r d\theta$. It satisfies $\beta(Y)=t(-qm+pn)=t$.

Once we have our building blocks of an Euler flow, we need to glue the flows in a smooth way: both the vector field and the one-form, and such that the critical set of the Bernoulli function is controlled. More precisely, we can do this interpolation by keeping the Bernoulli function regular.

\begin{lemma}[Interpolation Lemma]\label{int}
Suppose we are given a vector field $Y=A\pp{}{\theta}+B\pp{}{\varphi}$ in the torus $T^2$ and a one-form $\gamma=Cd\theta + D d\varphi$, for some constants $A,B,C,D$, such that $\gamma(Y)=AC+DB=1>0$. Denote $t$ the coordinate in $[1,2]$. Then there is a volume preserving vector field $X$ and a one-form $\alpha$ in $T^2\times [1,2]$ such that
\begin{itemize}
\item $X|_{\{t=1\}}=\pp{}{\theta}$, $\alpha|_{\{t=1\}}=d\theta$,
\item $X|_{\{t=2\}}=Y$, $\alpha|_{\{t=2\}}=\gamma$,
\item $\alpha(X)>0$ everywhere,
\item $\iota_X d(t\alpha)=-dh$ where $h(t)$ is a function without critical points and equal to $t$ at the boundary.
\end{itemize}
\end{lemma}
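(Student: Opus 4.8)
The plan is to construct the interpolation explicitly on $T^2\times[1,2]$ by writing $X$ and $\alpha$ in coordinates $(\theta,\varphi,t)$ and letting the coefficients depend on $t$. Concretely, I would set
\[
X = a(t)\pp{}{\theta} + b(t)\pp{}{\varphi}, \qquad \alpha = c(t)\,d\theta + d(t)\,d\varphi,
\]
where $a,b,c,d$ are smooth functions on $[1,2]$ satisfying the boundary conditions $a(1)=1$, $b(1)=0$, $c(1)=1$, $d(1)=0$ (matching $\pp{}{\theta}$ and $d\theta$ near $t=1$), and $a(2)=A$, $b(2)=B$, $c(2)=C$, $d(2)=D$ (matching $Y$ and $\gamma$ near $t=2$), with all four constant near each endpoint so that the glued objects are smooth. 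This already takes care of the first two bullet points.

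**Next I would verify the divergence-free and Euler conditions.** Since $X$ has no $\pp{}{t}$-component and its coefficients depend only on $t$, the field $X$ preserves the standard volume $d\theta\wedge d\varphi\wedge dt$ automatically, giving the volume-preserving property. For the Bernoulli equation, compute $d(t\alpha) = d\bigl(tc(t)\bigr)\wedge d\theta + d\bigl(td(t)\bigr)\wedge d\varphi = (tc)'\,dt\wedge d\theta + (td)'\,dt\wedge d\varphi$, so
\[
\iota_X d(t\alpha) = -a(t)(tc)'\,dt - b(t)(td)'\,dt = -\Bigl(a(t)(tc)' + b(t)(td)'\Bigr)\,dt.
\]
Thus the required identity $\iota_X d(t\alpha) = -dh$ holds with $h'(t) = a(t)(tc(t))' + b(t)(td(t))'$; I then need to choose the four functions so that the right-hand side is \emph{strictly positive} on $[1,2]$ (this gives $h$ with no critical points), and so that $h'(t)=1$ near the endpoints (forcing $h(t)=t$ there, after fixing the additive constant by $h(1)=1$), and finally so that $\alpha(X) = a(t)c(t) + b(t)d(t) > 0$ everywhere. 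Near $t=1$ both expressions equal $1$ by the boundary data, and near $t=2$ the quantity $\alpha(X)$ equals $AC+BD=1$ while $h'$ equals $a\cdot c + b\cdot d = 1$ as well (since $(tc)'=c$, $(td)'=d$ when $c,d$ are constant), so the endpoint values are consistent.

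**The main obstacle** is the simultaneous positivity of $\alpha(X)$ and of $h'(t)$ along an arbitrary path in parameter space from $(1,0,1,0)$ to $(A,B,C,D)$, since $A,B,C,D$ are essentially unconstrained apart from $AC+BD=1$. My approach to this is to route the interpolation through an intermediate stage: first deform only the \emph{one-form} part, moving $(c,d)$ from $(1,0)$ to $(C,D)$ while keeping $(a,b)=(1,0)$ fixed; along this stage $\alpha(X) = c(t)$ and $h'(t) = (tc)' = c + tc'$, and one chooses the path of $c,d$ monotonically and slowly enough (a trick used repeatedly in this circle of constructions) so that $|tc'|$ stays small and $c$ stays positive — reparametrizing $t$ if necessary, or inserting a long almost-constant stretch. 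Then deform only $(a,b)$ from $(1,0)$ to $(A,B)$ keeping $(c,d)=(C,D)$ fixed; here $\alpha(X) = a(t)C + b(t)D$ and $h'(t) = a(t)C + b(t)D$ (again $(tC)'=C$), so the two quantities coincide and I just need the affine-in-$(a,b)$ function $aC+bD$ to stay positive along the path from value $1$ (at $(1,0)$: $C$) — wait, one checks $C$ and $AC+BD=1$ have the right signs and interpolates within the half-plane $\{aC+bD>0\}$, which is convex and contains both endpoints. Concatenating the two stages (with a cutoff so everything is locally constant at the junctions and endpoints) and then smoothing yields the desired $X$, $\alpha$, and $h$ on $T^2\times[1,2]$, completing the proof.
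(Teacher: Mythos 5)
Your general setup is the same as the paper's (an ansatz $X=a(t)\pp{}{\theta}+b(t)\pp{}{\varphi}$, $\alpha=c(t)d\theta+d(t)d\varphi$, automatic volume preservation, and the correct identity $h'=a(tc)'+b(td)'$), but the two-stage routing you propose has a genuine gap: it breaks down whenever $C\le 0$, and in fact already whenever $C$ is small. In your first stage you keep $X=\pp{}{\theta}$ and move $c$ from $1$ to $C$, so $\alpha(X)=c(t)$ must cross $0$ if $C\le 0$ (which is allowed: take $A=C=-1$, $B=D=0$, so $AC+BD=1$; such values do occur from the Dehn-surgery framings in the application). Moreover $h'=(tc)'$ there, and $\int_{1}^{t_1}(tc)'\,dt=t_1C-1$ is determined by the endpoints alone, so no reparametrization, slowing down, or ``long almost-constant stretch'' can prevent $h$ from having a critical point once $t_1C<1$ (e.g.\ $A=4$, $C=1/4$, $B=D=0$). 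Your second stage inherits the same problem: it starts at $\alpha(X)=aC+bD=C$, so the half-plane $\{aC+bD>0\}$ does \emph{not} contain the starting point $(a,b)=(1,0)$ when $C\le0$; the sign check you flag with ``wait, one checks\dots'' is exactly the point that fails.

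The missing idea, which is how the paper's seven-step interpolation works, is to \emph{decouple} the deformation of the $d\theta$-coefficient of $\alpha$ from positivity by first routing $X$ through the intermediate field $\pp{}{\varphi}$: one turns on a $d\varphi$-component of $\alpha$, rotates $X$ from $\pp{}{\theta}$ to $\pp{}{\varphi}$, and only then changes the $d\theta$-coefficient to $C$ --- while $X=\pp{}{\varphi}$ this change is invisible to both $\alpha(X)$ and $h'$, so the sign of $C$ is irrelevant. One then rotates $X$ to $A\pp{}{\theta}$ (using the normalization that, WLOG, $A$ and $C$ have the same sign, since $AC+BD>0$ forces $AC>0$ or $BD>0$), adjusts the $d\varphi$-coefficient to $D$ while $X$ has no $\pp{}{\varphi}$-component, and finally turns on $B\pp{}{\varphi}$, where $AC+H\,BD\ge\min(AC,\,AC+BD)>0$. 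Without some such intermediate passage, the monotone two-stage path cannot keep both $\alpha(X)$ and $h'$ positive for all admissible $(A,B,C,D)$.
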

\begin{proof}
Break the interval $[1,2]$ into seven disjoint intervals $I_1,...,I_7$, for instance $I_i=[1+\frac{i-1}{7},1+\frac{i}{7}]$. Denote by $H_i(t)$ a cutoff function with support in $I_i$ such that $H_i'\geq 0$, with $H_i=0$ at $1+\frac{i-1}{7}$ and $H_i=1$ at $1+\frac{i}{7}$.

Since we have $AC+DB>0$, we might assume that $A$ and $C$ are of the same sign. Otherwise, the constants $D$ and $B$ are of the same sign and an analogous interpolation is done.

\begin{enumerate}
\item In the first interval, take $X=\pp{}{\theta}$ and $\alpha=d\theta+ H_1(t)d\varphi$. We have $\alpha(X)=1>0$.

\item In the second interval, take $X=\pp{}{\theta} + H_2(t)\pp{}{\varphi} $ and $\alpha=d\theta+ d\varphi$. We have $\alpha(X)=1+H_2(t)>0$.

\item In the third interval, take $X= (1-H_3(t))\pp{}{\theta}+ \pp{}{\varphi}$ and $\alpha=d\theta+d\varphi$. We have $\alpha(X)=(1-H_3(t))+1>0$.

\item In the fourth interval, take $X=\pp{}{\varphi}$ and $\alpha=(1+H_4(t)(C-1))d\theta + d\varphi$. We have $\alpha(X)=1>0$.
\item In the fifth interval, take $X=H_5(t)A\pp{}{\theta} + (1-H_5(t))\pp{}{\varphi}$ and $\alpha= Cd\theta + d\varphi$. We have $\alpha(X)= ACH_5(t)+(1-H_5(t))>0 $ since $AC>0$.
\item In the sixth interval, take $X=A\pp{}{\theta}$ and $\alpha=Cd\theta + (1+H_6(t)(D-1))d\varphi$. We have $\alpha(X)=AC>0$.
\item In the last interval, take $X=A\pp{}{\theta}+ H_7(t) B\pp{}{\varphi}$ and $\alpha= Cd\theta + Dd\varphi$. We have $\alpha(X)=AC+H_7(t)BD>0$. It is positive because if $BD$ is positive, then $AC+H_7(t) BD> AC>0$ and if $BD$ is negative then $AC+H_7(t)BD>AC+BD>0$.
\end{enumerate}
This a priori arbitrary way of interpolating by steps is done to achieve both that $h$ has no critical points and that it suits an application in the last part of this section. The key point to achieve that $h$ has no critical points is to avoid decreasing a component in $\alpha$ which is non-vanishing when evaluated at $X$.

We constructed $(X,\alpha)$ satisfying the first three conditions. To check the fourth condition, we will do it in each interval. If we compute $\iota_Xd(t\alpha)$ we obtain:
\begin{equation*}
\iota_Xd(t\alpha)=
\begin{cases}
- dt \qquad &t\in I_1 \\
-[1+ H_2 ]dt \qquad &t\in I_2\\
-[ 2-H_3 ]dt \qquad &t\in I_3\\
-dt \qquad &t\in I_4\\
-[ H_5AC+(1-H_5) ] dt \qquad &t\in I_5\\
- AC dt \qquad &t\in I_6\\
- [ AC+H_7BD ]dt\qquad &t\in I_7
\end{cases}
\end{equation*}
It is clear from the case-by-case description that the function $h$, which is the indefinite integral with respect to $t$ of the $dt$-term, has no critical points in $[1,2]$. At the boundary we have $\iota_Xd(t\alpha)=-dt$.

The vector field preserves the volume form $\mu=dt\wedge d\varphi \wedge d\theta$, since at any point the vector field is of the form $X=A(t) \pp{}{\theta} + B(t) \pp{}{\varphi}$ which implies that $\mathcal{L}_X \mu=d\iota_X\mu=0$.
\end{proof}
We can now apply this lemma in $U= [1,2] \times T^2$ to interpolate between $(Y,\beta)$ and $(X,\alpha)$. By Lemma \ref{int}, we can do it so that the function $h$ extends in the building blocks as the function $B$ and has no critical points in $U$. Hence we constructed a pair $(\tx,\ta)$ in $M_1$ with $\ta(\tx)>0$ and $\iota_{\tx}d\ta=-dB'$ for some function $B'\in C^\infty(M)$.

Finally, it only remains to glue $M_2$ and $M_3$ to the boundary components of $M_1$, i.e. $W_1= \partial \tilde D_1 \times S^1$ and $W_2= \partial \tilde D_2 \times S^1$. Take for example $M_2$, and $M_3$ is glued analogously. The vector field $Z$ is tangent to the leaves of the foliation (by torus and core Klein bottle) in $M_1$, and on the torus leaves $Z$ is linear and periodic. Hence at the surgered boundary, the vector field $Z$ and the one-form $\gamma$ are sent to $Z=C_1\pp{}{\theta}+C_2\pp{}{\varphi}$ and to $t\gamma=t(D_1d\theta+D_2\varphi)$ for some constants $C_1,C_2,D_1,D_2$ with $\gamma(Z)=C_1D_1+C_2D_2=1$.

We are under the hypotheses of Lemma \ref{int} in a neighborhood of the surgery locus, so we can construct a vector field and one-form $(X', \alpha')$ in $W_1$ that can be extended in $M_1$ as $(\tx,\ta)$ and in $M_2$ as $(Z, \gamma)$. Doing this to both pieces $M_2$ and $M_3$, we constructed a globally defined function $B\in C^\infty(M)$ such that $\alpha'(X')>0$, the equation $\iota_{X'} \alpha'=-dB$ satisfied and $B$ coincides with $\tilde B$ in $M_1 \setminus U_1 \cup U_2$ and with $G$ in $M_2$ and $M_3$. 

\subsection{Volume preservation and critical set}\label{ss:volcrit}

To prove that $X'$ is volume-preserving, we will prove that it preserves some volume in each part of the manifold that coincides along the glued boundaries.

\begin{enumerate}
\item Denote by $A$ a neighborhood of the boundary tori of $M_0$ where we applied the interpolation lemma. Then in $M_0\setminus A$ we have $X'=X=\pp{}{\theta}$ and $X$ clearly preserves $\mu= \mu_{\Sigma_0} \wedge d\theta$, where $\mu_{\Sigma_0}$ obtained by pulling back any area form of $\Sigma_0$ by the natural projection $\pi: \Sigma_0 \times S^1\rightarrow \Sigma_0$. This follows from the fact that $\iota_X\mu= \mu_{\Sigma_0}$, which is a closed form.
\item In any interpolation domain, the vector field is of the form $X'= A(t)\pp{}{\theta}+B(t)\pp{}{\varphi}$. Taking as volume $\mu_U= dt \wedge d\varphi \wedge d\theta$, we have 
$$ \iota_{\tx}\mu_U= A(t) dt \wedge d\varphi - B(t)dt\wedge d\theta, $$
 which is again closed. $\mu_U$ coincides with $\mu$ for a suitable choice of $\mu_{\Sigma_0}$.
 \item In every solid torus $V_i$, we have $Y=\pp{}{\theta'}$ which preserves the volume form $\phi(r)dr\wedge d\varphi'\wedge d\theta'$, for any function $\phi$ equal to $r^2$ near $r=0$. We might choose $\phi(r)$ to be constantly equal to $1$ near the boundary of $V_i$ so that it coincides with $\mu_U$.
 \item In the mapping tori $M_2$ and $M_3$, the vector field $X'=\pp{}{\theta'}$ preserves the volume form $\mu= \mu_S \wedge d\theta' $, where $\mu_S$ is the two form induced by any area-form $\phi(r^2)d\varphi \wedge dr$ of $S^1\times [-1,1]$. It is preserved by the diffeomorphism that we used to define the mapping torus, so it yields a two-form $\mu_S$ in $M_2$ or $M_3$. Choosing $\phi$ in an appropriate way, the volume form extends as $\mu_U$ in the gluing locus.
\end{enumerate}
In conclusion, we construct a globally defined volume $\mu$ on $M$ preserved by $X'$. We just proved that $X'$ is volume-preserving and admits a one-form $\alpha'$ such that $\alpha'(X')>0$ and $\iota_{X'} \alpha'=-dB$ for some function $B\in C^\infty(M)$. Applying Lemma \ref{eul}, we deduce that $X'$ satisfies the Euler equations for some metric with Bernoulli function $B$.\\

\paragraph{\textbf{Analyticity}}
The function $B$ is a priori only smooth. However, we have the following theorem of equivalence between smooth and analytic functions. We state a particular case that is enough for our purposes.

\begin{theorem}[{\cite[Theorem 7.1]{Sh}}] \label{An}
Let $f$ be a smooth function on a manifold $M$. Suppose that at every point we have locally that:
\begin{enumerate}
\item $f$ is regular,
\item $f$ is the sum of a constant and a power of a regular function,
\item $f$ is $\pm x_1^2 \pm ... \pm x_k^2 + \operatorname{const}$ for suitable coordinates $(x_1,...,x_n)$.
\end{enumerate}
Then $f$ is equivalent to an analytic function.
\end{theorem}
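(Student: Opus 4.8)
I read ``$f$ is equivalent to an analytic function'' as: there is a diffeomorphism $\Phi$ of $M$ with $f\circ\Phi$ real-analytic for some real-analytic structure compatible with the smooth structure of $M$ --- which is what the application needs, since the Euler equations are diffeomorphism-covariant, so pushing $(X,\alpha)$ forward by $\Phi$ replaces the Bernoulli function by $f\circ\Phi$. By the Whitney--Grauert--Morrey theorems $M$ carries a compatible analytic structure $\mathcal A_0$, and any two are related by an analytic diffeomorphism isotopic to the identity; hence it suffices to produce \emph{some} compatible analytic structure $\mathcal A$ on $M$ in which $f$ is analytic, and then compose with the comparison diffeomorphism $(M,\mathcal A_0)\to(M,\mathcal A)$. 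So the task is to glue the local analytic pictures of $f$ supplied by the hypotheses into a global analytic atlas.

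First I would dispose of the case actually used in this paper, in which only conditions (1) and (3) occur, so that $f$ is Morse--Bott on (the compact part of) $M$. Fix an analytic structure $\mathcal A_0$ and, by Whitney's analytic approximation theorem, pick an analytic $g$ that is $C^\infty$-close to $f$. Morse--Bott functions on a compact manifold are structurally stable under $C^2$-small perturbations: the critical submanifolds of $g$ persist by the implicit function theorem applied to $dg=0$, using transverse nondegeneracy of $d^2f$; the transverse indices agree; and one matches $g$ with $f$ on a neighborhood of the critical set by the parametric Morse--Bott lemma, then extends over the complement, where both functions are fiber bundles over the same interval of regular values. This yields a diffeomorphism $\Phi$ with $g\circ\Phi=f$, so $f$ is equivalent to the analytic function $g$.

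For the general statement I would revert to the gluing strategy. Choose for every point a smooth chart in which $f$ equals one of the normal forms $N_p$ --- a coordinate, a power $x_1^k$, or $\pm x_1^2\pm\dots\pm x_k^2$, all polynomial --- and a locally finite cover by such charts; the transition maps are smooth diffeomorphisms intertwining the $N_p$, and making $f$ analytic amounts to correcting these charts, within the sheaf of $f$-preserving local diffeomorphisms, so that the new transition maps become analytic. This splits into a local half --- every smooth germ preserving a normal form $N$ is $C^\infty$-approximable by analytic $N$-preserving germs, which one reads from the explicit normal forms and their finite determinacy (orders $1$, $k$, $2$ respectively) --- and a global half --- ordering the cover and correcting the charts one at a time by $f$-preserving near-identity diffeomorphisms built with a partition of unity, a Mather-type descent that uses that this sheaf is fine enough. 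The hard part will be the global descent, and especially hypothesis (2) with $k\ge3$, where the singularity type of $f$ is not open under perturbation: there the local approximation must be carried out \emph{relative} to the hypersurface $\{x_1=0\}$ so as not to unfold the $k$-fold tangency, and the descent must respect this relative structure. With both halves in place, the corrected charts define $\mathcal A$, $f$ is analytic in it, and composing with the comparison diffeomorphism completes the proof.
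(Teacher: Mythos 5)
This statement is not proved in the paper at all: it is imported verbatim as \cite[Theorem~7.1]{Sh} (Shiota) and used as a black box, so there is no in-paper argument to compare yours against. Judged on its own, your proposal has a genuine gap in the one case the paper actually needs. You claim that Morse--Bott functions on a compact manifold are structurally stable under $C^2$-small perturbation, so that a Whitney-approximating analytic $g$ is right-equivalent to $f$. This is false whenever a critical submanifold $C$ has positive dimension --- which is exactly the situation here, since the Bernoulli functions of Section~3 have critical circles, tori and Klein bottles, i.e.\ condition (3) with $k<n$. Your implicit function theorem argument applied to $dg=0$ in the normal directions only produces a submanifold $C'$ diffeomorphic to $C$ on which the \emph{normal} derivative of $g$ vanishes; the tangential derivative of $g$ along $C'$ has no reason to vanish, and for a generic analytic approximant it does not, so $g$ has finitely many isolated critical points near $C$ rather than a critical submanifold. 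Then no diffeomorphism can satisfy $g\circ\Phi=f$, since the critical sets differ. Repairing this requires approximating $f$ \emph{relative} to $C$ --- first making $C$ analytic, then keeping $g$ constant on $C$ with the prescribed transverse Hessian --- which is precisely the nontrivial content of Shiota's theorem and not a corollary of plain Whitney approximation.

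The second half of your proposal (the sheaf of $f$-preserving local diffeomorphisms, the local approximability of normal-form-preserving germs, and the Mather-type global descent) is a reasonable description of the architecture of such a proof, but as written it is an itinerary rather than an argument: you yourself flag the global descent and the $k\ge 3$ case of condition (2) as ``the hard part'' without carrying them out. Since the theorem is a citation, the honest options are either to quote it as the paper does, or to prove the special case actually used (conditions (1) and (3) only, i.e.\ Morse--Bott with no isolated extrema of odd order) by a genuinely relative approximation argument along the lines just indicated.
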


We only need to show that $B$ satisfies the conditions of this theorem, then $B$ is analytic taking some suitable charts. In $M_0\setminus A$, we have $B=h$. We initially defined $h$ in $\Sigma_0$ where it is a Morse function, hence $h$ is a Morse-Bott function in the considered three-dimensional space $M_0$. In each solid torus $V_i$ attached via Dehn surgery, we have $B=r^2$, and the only singularity is of Morse-Bott type (the critical core circle). In the pieces $M_1$ and $M_2$, the Bernoulli function has a Klein bottle as the singular set, since at the core of the mapping torus we have $B=r^2$. The singularity is again of Morse-Bott type. Finally, on the regions $U_i$ and $W_i$, we know that $B$ has no critical points.

Hence, the only singular points of $B$ admit an expression as required by the theorem above. We just proved that every orientable Seifert manifold admits a steady Euler flow with a Morse-Bott Bernoulli function, and by Theorem \ref{An} this function is equivalent to a non-constant analytic Bernoulli function. Observe that the same construction holds if the Seifert manifold has some boundary components.

\begin{theorem}\label{main2}
Every compact Seifert manifold (with or without boundary)admits a non-vanishing steady solution to the Euler equations (for some metric) with a non-constant analytic Bernoulli function. The same holds for a smooth Morse-Bott function.
\end{theorem}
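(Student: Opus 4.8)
The plan is to assemble the local model pieces constructed in Subsection \ref{ssec:blocks} into a globally defined pair $(X,\alpha)$ on any Seifert manifold $M$ with orientable base, using the Interpolation Lemma \ref{int} on each gluing collar, and then verify the three bullet conditions of Lemma \ref{eul} together with the hypotheses of Theorem \ref{An}. First I would take the trivial block $M_0=\Sigma_0\times S^1$ with $X=\partial/\partial\theta$, $\alpha=h\,d\theta$, the solid-torus blocks $V_i$ with $(Y,\beta)$, and the twisted $I$-bundle blocks $M_1,M_2$ with $(Z,\gamma)$ as already described, recording in each case the boundary data (a linear vector field plus a linear one-form with positive pairing, and a radial/quadratic Bernoulli function equal to $t$ at the boundary). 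Then, at each boundary torus $U_i\cong[1,2]\times T^2$ or $W_j$, I would transport the solid-torus (resp. $I$-bundle) data through the Dehn surgery matrix, obtaining constants $A,B,C,D$ with $AC+DB>0$ exactly as computed for $Y|_{\partial U}$ and $\beta|_{\partial U}$, rescale so the pairing equals $1$, and apply Lemma \ref{int} to interpolate between $(\partial/\partial\theta,d\theta)$ on one side and $(Y,\beta)$ (resp. $(Z,\gamma)$) on the other. The lemma guarantees $\alpha(X)>0$ throughout the collar and that $\iota_Xd(t\alpha)=-dh$ with $h$ critical-point-free and equal to $t$ at the ends, so $h$ glues to the Bernoulli functions $h$, $r^2$, $r^2$ coming from the adjacent blocks.

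Next I would check volume preservation globally: on each block and each collar the field has the form $A(t)\partial_\theta+B(t)\partial_\varphi$ over a surface-times-circle, so $\iota_X\mu$ is closed for the obvious product volume, as spelled out in Subsection \ref{ss:volcrit}; patching these local volumes with a partition of unity (or rather observing that the forms already agree on overlaps up to the construction) yields a single $X$-invariant volume $\mu$ on $M$. With $X$ volume-preserving, $\alpha(X)>0$, and $\iota_Xd\alpha=-dB$ for the globally defined smooth $B$, Lemma \ref{eul} produces a metric $g$ with $g(X,\cdot)=\alpha$ for which $X$ solves the steady Euler equations with Bernoulli function $B$. Finally I would verify analyticity: on $M_0\setminus A$ one has $B=h$, a Morse function on $\Sigma_0$ hence Morse-Bott in three dimensions, satisfying condition (3) of Theorem \ref{An}; on the $V_i$ and on $M_1,M_2$ one has $B=r^2$ with Morse-Bott critical circle or critical Klein bottle, again condition (3) (or (2)); and on the collars $B=h(t)$ is regular, condition (1). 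Theorem \ref{An} then gives coordinates in which $B$ is analytic, and $B$ is visibly non constant, so $X$ is an Arnold fluid; the same construction with $B$ left smooth Morse-Bott proves the second assertion.

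The main obstacle I anticipate is not any single computation but the bookkeeping of framings and orientations across the Dehn surgeries and the $M_2,M_3$ gluings: one must be sure that the linear data $(Y,\beta)$ pushed through each surgery matrix $\begin{pmatrix}p&q\\ m&n\end{pmatrix}$ really lands in the form required by Lemma \ref{int} with strictly positive pairing, and that after rescaling the one-form by a positive constant the Bernoulli function still matches the $r^2$ (or $r$) profile at the collar end so that the glued $B$ is $C^\infty$ and has no new critical points in the collar. The subtlety is precisely why the interpolation in Lemma \ref{int} is carried out in seven carefully ordered steps — to avoid ever decreasing a component of $\alpha$ that pairs nontrivially with $X$, which would create a spurious critical point of $h$. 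Once one trusts that mechanism and checks the sign of $AC$ (or, in the other case, of $BD$) in each surgered collar, the rest is routine gluing; the case of the twisted $I$-bundle blocks is identical since $Z$ is linear and periodic on the torus leaves, so its boundary data has the same shape $C_1\partial_\theta+C_2\partial_\varphi$, $t(D_1d\theta+D_2d\varphi)$ with $C_1D_1+C_2D_2=1$.
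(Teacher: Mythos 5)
Your proposal follows essentially the same route as the paper: the same block decomposition ($M_0=\Sigma_0\times S^1$, solid tori glued by Dehn surgery, twisted $I$-bundles over Klein bottles), the same use of the Interpolation Lemma on the collars to keep the Bernoulli function regular there, the same piecewise verification of volume preservation, and the same appeal to Lemma \ref{eul} and Shiota's theorem for the metric and analyticity. The bookkeeping concern you flag is handled in the paper exactly as you anticipate (the pairing $\beta(Y)$ at the surgered boundary works out to be positive, in fact equal to $t$), so the argument is complete.
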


To prove the general case of graph manifolds, it only remains to glue together Seifert manifolds with boundary and obtain globally defined Arnold flows.

\subsection{Proof of Theorem \ref{main}}

Let $M_1,M_2$ be two Seifert manifolds with boundary. We shall assume that there is only a single torus component in the boundary. The manifold $M_1$ is glued to $M_2$ by a diffeomorphism 

$$\varphi: \partial M_1 \longrightarrow \partial M_2,$$
 between both torus boundaries. By Theorem \ref{main2}, we can construct steady Euler flows with non-constant analytic Bernoulli function in both manifold $M_1,M_2$.  It follows from the construction in the previous section that we can assume that in the neighborhood of the boundaries $\partial M_i=U_i=T^2 \times [1,2]$ we have $X_i=\pp{}{\theta_i}$ and $\alpha=t_i d\theta_i$ where $(\theta_i,\varphi_i,t_i)$ are coordinates in $U_i$. We might assume that we glue $M_1$ and $M_2$ using a domain $U=T^2\times [-1-\varepsilon,1+\varepsilon]$ such that $U\cap U_1= T^2\times [-1,-1-\varepsilon]$ and $U\cap U_2=T^2\times [1,1+\varepsilon]$. That is, we thickened the gluing locus. We can find a coordinate $t$ in $U$ such that
\begin{align}\label{par}
&t^2=t_1 \text{ for } t\in [-1-\varepsilon,-1] \\
  &t^2=t_2 \text{ for } t\in [1,1+\varepsilon]\label{par2}
\end{align} 
   Consider the coordinates $(\theta_2,\varphi_2,t)$ of $U$, obtained by extending $\theta_2,\varphi_2$ from the boundary of $M_2$ to $U$.

The gluing diffeomorphism is by construction determined by some Dehn coefficients. The vector field $X_1$ and the one-form $\alpha_1$, which are in $U\cap U_1$ are of the form $X_1=C_1\pp{}{\theta_2} + C_2\pp{}{\varphi_2}$ and $\alpha_1=t^2( D_1d\theta_2 + D_2d\varphi_2)$ for some constants $C_1,C_2,D_1,D_2$. On the other hand, we have $X_2=\pp{}{\theta_2}$ and $\alpha_2=t^2d\theta_2$ defined in $U\cap U_2$. Using Lemma \ref{int} in $[-1-\varepsilon,-1]$, we construct a vector field $X'$ and a one-form $\beta$ satisfying $\iota_{X'}\beta=-dH$ for some function $H$ without critical points that extends as $t^2$ in all $U$. Furthermore, in $T^2 \times \{-1\}$, the pair is equal to $(X_1,\alpha_1)$ and in $T^2 \times \{1\}$ is equal to $(\pp{}{\theta},t^2d\theta)$. Finally, the vector field $\pp{}{\theta}$ and the one-form $t^2d\theta$ defined in $U$ extend everywhere in $M_1$ and $M_2$ by conditions \eqref{par} and \eqref{par2}. 

We obtained a globally defined volume-preserving vector field $X'$ and a one-form $\beta$ such that $\beta(X)>0$ and $\iota_X \beta=-dB$. The only new critical level set of the Bernoulli function is given by $T^2 \times \{t=0\}$, a non degenerate critical torus, since there the Bernoulli function is equal to $t^2$. This concludes the proof of the Theorem \ref{main}, since the real analyticity of $B$ follows again from Theorem \ref{An}.

\section{Fomenko's theory for Arnold flows}\label{sec:fom}

In this section, we use the theory of Bott integrable systems developed by Fomenko et alli to realize Arnold flows with any possible Morse-Bott Bernoulli function. The connection between these steady flows and the theory of Morse-Bott integrable systems was already used in \cite{EG0}. We will show that any topological configuration (in the sense of a graph manifold and a given admissible Morse-Bott function) can be realized by an Arnold flow. To simplify the discussion, we will treat in this section the case where the Morse-Bott integral contains a single connected critical submanifold in each connected component of the critical level set. In the language of atoms introduced in \cite{BF}, this means that we assume that the Bott integral only has simple atoms. We leave for the Appendix the case of arbitrary $3$-atoms, where we also discuss the topological classification of the moduli of Arnold flows with Morse-Bott Bernoulli function. Taking into account this appendix, we get a proof of Theorem \ref{main2}.

\subsection{Topology of Bott integrable systems}
\label{ssec:blocks2}

We will describe in this subsection some aspects of the topological classification of integrable systems with Bott integrals in isoenergy surfaces of dimension three. For more details on this theory, we confer the reader to \cite{F1} which we will mainly follow in the discussion.

Consider a symplectic manifold $(M,\omega)$ of dimension four with equipped with an integrable Hamiltonian system $F=(H,f)$, where $H,f\in C^\infty(M)$. This means that the pair of functions satisfies $dH\wedge df \neq 0$ almost everywhere and that they commute with respect to the Poisson bracket $\{f,H\}=0$. Denote by $Q$ a three-dimensional regular isoenergy level set of $H$, and assume that $f$ restricts to $Q$ as a Morse-Bott function.

Let us denote by $(H)$ the class of orientable closed three-manifolds that are isoenergy hypersurfaces of some integrable Hamiltonian system with the properties described above, in some four-dimensional symplectic manifold with boundary. Similarly, we denote by $(G)$ the class of orientable graph manifolds, as introduced in Definition \ref{def:graph}, and $(Q)$ the class of three-manifolds that can be decomposed into the sum of ``elementary bricks" which are solid tori $D^2\times S^1$, a torus times an interval $T^2\times I$ or $N^2 \times S^1$. Here $N^2$ denotes a disk with two holes. In a series of papers \cite{BF,F2,FZ1,FZ2}, it was proved that all three classes coincide.
\begin{theorem}[Brailov-Fomenko, Fomenko, Fomenko-Zieschang]\label{FZ}
The three classes coincide, i.e. we have $(H)=(Q)=(G)$.
\end{theorem}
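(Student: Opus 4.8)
The plan is to establish the cycle of inclusions $(H)\subseteq(Q)\subseteq(G)\subseteq(H)$, which forces the three classes to coincide. Three rather different ingredients are needed: a soft topological argument for $(Q)\subseteq(G)$ and $(G)\subseteq(Q)$, a topological analysis of Bott isoenergy surfaces for $(H)\subseteq(Q)$, and a realization theorem for the hard inclusion $(G)\subseteq(H)$.

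For $(Q)\subseteq(G)$ I would simply observe that each elementary brick, namely $D^2\times S^1$, $T^2\times I$ and $N^2\times S^1$, is a trivial circle bundle over a disk, an annulus, or a pair of pants, hence a Seifert fibered space with torus boundary; a manifold glued from such bricks along torus boundary components is therefore a graph manifold by definition. For the converse $(G)\subseteq(Q)$ I would take a graph manifold $M$, written as Seifert pieces $M_j$ glued along tori, and feed each $M_j$ through the construction of Section~\ref{sec:pre}: over an orientable base, $M_j$ is $\Sigma_0\times S^1$ with some boundary tori Dehn-filled by solid tori, and a pants decomposition of $\Sigma_0$ lifts to a decomposition of $\Sigma_0\times S^1$ into bricks $N^2\times S^1$, $T^2\times I$ and $D^2\times S^1$ glued along tori; the twisted $I$-bundles over Klein bottles occurring when the base is non-orientable are split along an incompressible torus and reassembled from the same bricks (or, equivalently, appended to the list of admissible bricks). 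Since the gluings producing $M$ are along tori, $M\in(Q)$.

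For $(H)\subseteq(Q)$ I would proceed as follows. Let $Q$ be a regular isoenergy hypersurface with $f|_Q$ a Bott integral, let $C\subseteq Q$ be the union of its critical level sets, and let $\mathcal N(C)$ be a small invariant tubular neighborhood. On $Q\setminus\mathcal N(C)$ the function $f|_Q$ is a proper submersion with compact connected fibers, which by the Liouville--Arnold theorem are tori, so each component there is $T^2\times I$. Each component of $\mathcal N(C)$ is a regular neighborhood of a connected critical level set; invoking Fomenko's classification of three-dimensional Bott singularities (the ``atoms''), its critical submanifolds are minimax circles, saddle circles, or Klein bottles, and such a neighborhood decomposes along tori into pieces $D^2\times S^1$, $N^2\times S^1$ and $T^2\times I$, with the twisted analogues in the Klein bottle case. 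Hence $Q\in(Q)$.

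The genuine difficulty is the realization inclusion $(G)\subseteq(H)$: one must endow an arbitrary brick manifold with the structure of an isoenergy surface of a Bott-integrable system. My approach would be to construct a local integrable model on each brick — on $D^2\times S^1$ and $N^2\times S^1$ a Hamiltonian inducing the obvious circle action together with an integral having a single minimax, resp. saddle, atom; on $T^2\times I$ a model foliated by Liouville tori interpolating between the two boundary foliations, with the prescribed gluing matrix in $GL(2,\mathbb Z)$ absorbed by an affine change of the torus coordinates — and then glue the symplectic pieces (manifolds with boundary) so that the Liouville foliations, the integral $f$, and the symplectic form all match across the seam tori. The obstacle is exactly that one must realize \emph{every} gluing configuration, equivalently every marked molecule in Fomenko's sense, without destroying the Bott property of the global integral or the non-degeneracy of the form near the seams; this is the substance of the Fomenko--Zieschang theorem and is where all the work lies. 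Everything else in the chain is formal, and once the three inclusions are in place the equality $(H)=(Q)=(G)$ follows.
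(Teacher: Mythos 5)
The paper does not actually prove this statement: Theorem \ref{FZ} is quoted as a known result of Brailov--Fomenko, Fomenko and Fomenko--Zieschang, with the proofs residing in the cited works \cite{BF,F2,FZ1,FZ2}. So there is no in-paper argument to compare yours against; the following assesses your outline on its own terms.

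Your cycle of inclusions is the standard architecture, and the two soft inclusions are essentially correct: $(Q)\subseteq(G)$ because each brick is a trivial circle bundle over a surface with boundary, and $(G)\subseteq(Q)$ via pants decompositions of the Seifert bases. (For the twisted $I$-bundle over the Klein bottle you should use its second Seifert fibration, over a disk with two exceptional fibers of order $2$, which exhibits it as $N^2\times S^1$ with two solid tori glued in; ``appending it to the list of admissible bricks'' is not an option, since the class $(Q)$ is defined by a fixed list.) The two substantive inclusions, however, are named rather than proved. For $(H)\subseteq(Q)$ the real content is that the Hamiltonian field of $H$ is non-vanishing on the regular level $Q$ and tangent to the critical submanifolds of $f|_Q$, forcing them to be circles, tori or Klein bottles, and that a saddle-circle neighborhood is a possibly twisted $N^2\times S^1$; this is exactly the Fomenko--Matveev local analysis of atoms and cannot be taken for free in a self-contained proof. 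For $(G)\subseteq(H)$ you describe the right strategy (local integrable models on each brick, glued so that the foliation, the integral and the symplectic form match across the seam tori) and then explicitly defer ``where all the work lies''; that deferred step is precisely the realization theorem of \cite{BFM} and is the genuine gap in your proposal. Within the logic of this paper the gap could in fact be closed by forward reference: Proposition \ref{sym} and Theorem \ref{maincor} symplectize the Arnold fluids of Section \ref{sec:fom} to produce, on any graph manifold and for any marked molecule, a Bott-integrable system having it as isoenergy surface, which is an independent proof of $(G)\subseteq(H)$. As written, though, your proposal establishes only the formal part of the theorem and leaves both hard inclusions to citation.
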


With the assumption that we took on the critical level set of $f$ (i.e. that there are only simple atoms), up to five types of blocks describe the topology of the foliation induced by the Bott integral. These five blocks are:
\begin{itemize}
\item Type $I$: The solid torus $S^1 \times D^2$.
\item Type $II$: The thick torus $T^2 \times [1,2]$.
\item Type $III$: The domain $N^2 \times S^1$, where $N^2$ is a $2$-dimensional disk with two holes.
\item Type $IV$: The mapping torus of $N^2$, with a rotation of angle $\pi$ that we will denote $N^2 \tilde \times S^1$.
\item Type $V$: The mapping torus of $S^1 \times [-1,1]$ by the diffeomorphism $\varphi(\theta,t)=(-\theta,-t)$. 
\end{itemize}

Denote by $f$ the Bott integral and consider the following integers counting the different critical submanifolds of $f$: the number $m$ of stable periodic orbits (minimum or maximum), the number $p$ of critical tori (minimum or maximum), the number $q$ of unstable critical circles with orientable separatrix diagram, the number $s$ of unstable critical circles with non-orientable separatrix diagram, and the number $r$ of critical Klein bottles (minimum or maximum). Then the manifold $M$ can be represented as $M=mI+pII+qIII+sIV+rV$, gluing the elementary blocks by certain diffeomorphisms of the torus boundary components.

If we further indicate how the blocks are connected by means of edges (if we want, oriented with respect to the direction in which the function increases its value), we obtain a complete topological description of the level sets of the Bott integral.

The topology of the function is determined by some graph, see also \cite{Cas}. The graph satisfies that each vertex has one, two or three edges. This holds because blocks of type $I$ and $V$ have one boundary component, blocks of type $II$ and $IV$ have two boundary components and blocks of type $III$ has three boundary tori. In order to fix the topology of the ambient manifold, one needs to specify the mapping class of each gluing diffeomorphism: the coefficient the Dehn twist. As described in \cite[Section 4.1]{F1}, there is a family of natural choices of framings (all equivalent) in each boundary torus, and hence the coefficient of a Dehn twist determines the gluing isotopy class. Fixing a graph with Dehn coefficients in each edge determines both the topology of the manifold and the topology of the function.

The theory of these graph invariants also applies to Arnold flows with a Morse-Bott Bernoulli function. A way to formalize it is the following lemma.

\begin{lemma}
Let $B$ denote the Bernoulli function of a non-vanishing steady Euler flow $X$ on a Riemannian three-manifold $(M,g)$. Then there is a symplectic form $\omega$ in $M\times [-\varepsilon,\varepsilon]$ such that $X$ is the Hamiltonian vector field of the coordinate $t$ in $[-\varepsilon,\varepsilon]$ along the zero level set. In particular if $dB\neq 0$ on a dense subset of $M$, we obtain that $(t,B)$ defines an integrable system on $(M\times [-\varepsilon,\varepsilon],\omega)$.
\end{lemma}

\begin{proof}
By assumption, the vector field $X$ satisfies
\begin{equation*}
\begin{cases}
\iota_Xd\alpha=-dB\\
d\iota_X\mu=0
\end{cases}
\end{equation*}
where $\alpha=g(X,\cdot)$ and $\mu$ is the Riemannian volume form. Denote by $t$ the coodinate in the second component of $M\times [-\varepsilon,\varepsilon]$, where $\varepsilon$ will be taken small enough. Consider the one-form $\beta=\frac{\alpha}{\alpha(X)}$ and construct the two-form
$$ \omega= d(t\beta) + \iota_X\mu. $$
It is closed since $d\omega=0+d\iota_X\mu=0$. On the other hand
\begin{align*}
\omega^2=dt\wedge\beta \wedge \iota_X\mu + td\beta\wedge \mu.
\end{align*}
But $dt\wedge \beta \wedge \iota_X\mu$ is a volume form, which implies that for $t$ small enough $\omega$ is non degenerate and hence a symplectic form in $M\times [-\varepsilon,\varepsilon]$. The vector field $X$ (trivially extended to $M\times [-\varepsilon,\varepsilon]$) satisfies $\iota_X\omega= -dt$ and so is the Hamiltonian vector field of $t$. This shows that $X$ on $M \times \{0\}$ is the restriction of a Hamiltonian vector field to a regular energy level set. Furthermore, since $B$ is an integral of $X$, we deduce that if $dB\neq 0$ on a dense set of $M$ we have $dt\wedge dB\neq 0$ on a dense set of $M\times [-\varepsilon,\varepsilon]$ and the pair $(t,B)$ defines an integrable system.
\end{proof}

In particular, if $B$ is Morse-Bott, we deduce that it is the integral of the Hamiltonian vector field $X$. Hence the pair $(M,B)$ is topologically classified by the theory of Bott integrable systems.

\begin{example}
Take for example the Arnold flow constructed in Theorem \ref{main} for Figure \ref{fig:emb}. Asssume we take a height function that only has a critical point in each value: this is true if the critical value joining $\partial D_2$ and $\partial \tilde D_1$ is lower than the critical value joining $\partial D_1$ and $\partial D_2$. A representation of the graph associated to such topological decomposition would be Figure \ref{fig:graph}. We took a framing in the boundary of the Klein bottle neighborhood for which the gluing is trivial as described in Section \ref{sec:pre}. Whenever the Dehn coefficients are trivial in some gluing, nothing is indicated in the edge. The coefficients $(\alpha_i,\beta_i)$ are indicated by the Seifert invariants.

\begin{figure}[!ht]
\begin{center}
\begin{tikzpicture}
     \node[scale=0.8] at (-0.8,-2,0) {$I$};
     \node[scale=0.8] at (1,-2,0) {$I$};
     \node[scale=0.8] at (2.8,-2,0) {$V$};
     
     \node[scale=0.8] at (1.9,-1) {$III$};
     \node[scale=0.8] at (1,0) {$III$};
     
     \node[scale=0.8] at (1,1,0) {$III$};
     \node[scale=0.8] at (1,2,0) {$III$};
     \node[scale=0.8] at (1,3,0) {$III$};
     \node[scale=0.8] at (1,4,0) {$III$};
     \node[scale=0.8] at (1,5,0) {$I$};
     
     \draw (-0.7,-1.8) -- (0.8,-0.2);
     \draw (1,0.2) -- (1,0.8);
     \draw (2.7,-1.8) -- (2,-1.2);
	 \draw (1.1,-1.8) -- (1.8,-1.2);   
	 \draw (1.8,-0.8) -- (1.1,-0.2);

     \draw (1.1,1.2) arc(-50:50:0.4);
     \draw (0.9,1.8) arc(130:230:0.4);
     
     \draw (1,2.2)--(1,2.8);
     
     \draw (1.1,3.2) arc(-50:50:0.4);
     \draw (0.9,3.8) arc(130:230:0.4);
     
     \draw (1,4.2) -- (1,4.8);
     
     \node[scale=0.7] at (-0.5,-0.9) {$(\alpha_1,\beta_1)$};
     \node[scale=0.7] at (0.85,-1.45) {$(\alpha_2,\beta_2)$};     
     
     \draw [->] (4,-2) -- (4,5);
     \node at (5,1.5) {$B$};
     
     \node at (-3,0) {};
\end{tikzpicture}
\end{center}
\caption{Example of graph representation}
\label{fig:graph}
\end{figure}

\end{example}

\subsection{Topological realization of Arnold flows}

Using similar arguments as we did to prove Theorem \ref{main}, we can construct an Arnold flow realizing each topogical configuration. Let us start by constructing an Arnold flow in each of the ``elementary blocks".

\begin{proposition} \label{ArnBl}
All blocks admit an Arnold flow with the following properties. For type $I$, the longitudinal core circle is a minimum or maximum of the Bernoulli function. For type $II$, the torus $T^2 \times \{3/2\}$ is a minimum or maximum of the Bernoulli function. For type $III$, critical set is a figure eight times a circle: the central circle is of saddle type. For type $IV$, exactly as for type $III$ but with a non orientable separatrix diagram for the critical circle. For type $V$, the core Klein bottle is a minimum or maximum of the Bernoulli function. In all cases, the boundary components are regular level sets of the Bernoulli function. For blocks $III$ and $IV$, we can assume that the Bernoulli function decreases (or increases) outwards in the exterior boundary component of $N^2$ and respectively increases (or decreases) in the other boundary components.
\end{proposition}

\begin{proof}
We construct in each block a vector field $X$ and a one-form $\alpha$ such that $\iota_X d\alpha=-dh$ for some function $h$ satisfying the mentioned properties. In all cases, it is easy to check that the vector field is volume-preserving as in Subsection \ref{ss:volcrit}.

In blocks of type $I$ and $V$, the construction is done as in Section \ref{sec:arn}. In the first one, the vector field is the longitudinal flow $\pp{}{\theta}$ with one-form $v(r)d\theta$, where $\theta$ is the longitudinal coordinate of the solid torus and $r$ the radial coordinate in $D^2$.  The function $v(r)$ is equal to $(\varepsilon + r^2)$ close to $r=0$ and equal to $ r$ close to the boundary $\{r=1\}$ if it is a minimum. If it a maximum we can take for example $v(r)=1+\varepsilon -r^2$ close to $r=0$ and $v(r)=1+\varepsilon - r$ close to $r=1$.

A type $V$ block is given by the mapping torus with core Klein bottle introduced in Subsection \ref{ssec:blocks}. The vector field is given by the mapping torus direction $\pp{}{\theta}$, and again the one-form is $v(r^2)d\theta$, where the function $v$ is equal to $\varepsilon + r^2$ close to $r=0$ and $r^2$ close to $r=1$. Similarly for a maximum, take $1+\varepsilon -r^2$ close to $r=0$ and $1+\varepsilon - r^2$ close to $r=1$. Recall that $r$ is the coordinate in $[-1,1]$, where the mapping torus is obtained by a diffeomorphism $\varphi: S^1 \times [-1,1] \rightarrow S^1 \times [-1,1]$.

For the type $II$ block, consider the standard coordinates $(\theta, \varphi,t)$ in $T^2 \times [1,2]$. Take the vector field $X=\pp{}{\theta}$ and the one-form $\alpha=v(t)d\theta$. If we want a minimum, we choose as function $v(t)=\varepsilon + (t-3/2)^2$, which implies that the Bernoulli function is $h=\int_t v'(t)= t^2-3t $. If we want it as a maximum then $v(t)=1+\varepsilon-(t-3/2)^2$ and $h=-t^2+3t$. We have $\iota_Xd\alpha=-dh$ and $\alpha(X)>0$.

For the type $III$ and type $IV$, denote by $\theta$ the coordinate in the $S^1$ component. Take a one-form $\alpha=v(x,y)d\theta$, where $(x,y)$ are coordinates in $N^2$. Taking the function $v$ such that it has a saddle point in $(0,0)$ and two minima or maxima in $(\pm 1,0)$ is enough. For example, we might choose
$$v(x,y)=K \pm \frac{1}{4}(y^2-x^2+1/2x^4),$$ 
for some big enough constant $K>0$ added to the Hamiltonian of the Duffing equation. Then as Bernoulli function we can choose similarly $h(x,y)= C \pm \frac{1}{4}(y^2-x^2+1/2x^4)$, for some constant $C$. We have
$$\iota_Xd\alpha=\iota_X (\pp{v}{x}dx\wedge d\theta + \pp{v}{y}dy\wedge d\theta)=-\pp{v}{x}dx-\pp{v}{y}dy,$$
 which is exactly equal to $-dh$. It is also satisfied that $\alpha(X)>0$. Depending on the sign, we obtain that $h$ is decreasing or increasing (outwards with respect to the boundary, that we take to be a level set of $h$) in the interior boundary components, and respectively increasing or decreasing in the exterior boundary components. Observe that the defined one-form and function $h$ are well defined in the mapping torus in the case of type $IV$ blocks. This is because the Duffing potential is invariant with respect to the rotation of angle $\pi$, which is easily seen in polar coordinates. So it yields a Morse-Bott function $\tilde h$ defined in the mapping torus. This covers the case of block $IV$.

The $N^2$ copy that we take is the one given by the level sets of the function $h$ or $v$: i.e. the boundary and holes we take are given by some of the regular level sets of these functions. Figure \ref{fig:level} gives a representation of the critical level set and the boundary level sets given by the function.
\begin{figure}
\begin{center}
\begin{tikzpicture}
     \node[anchor=south west,inner sep=0] at (0,0) {\includegraphics[scale=0.16]{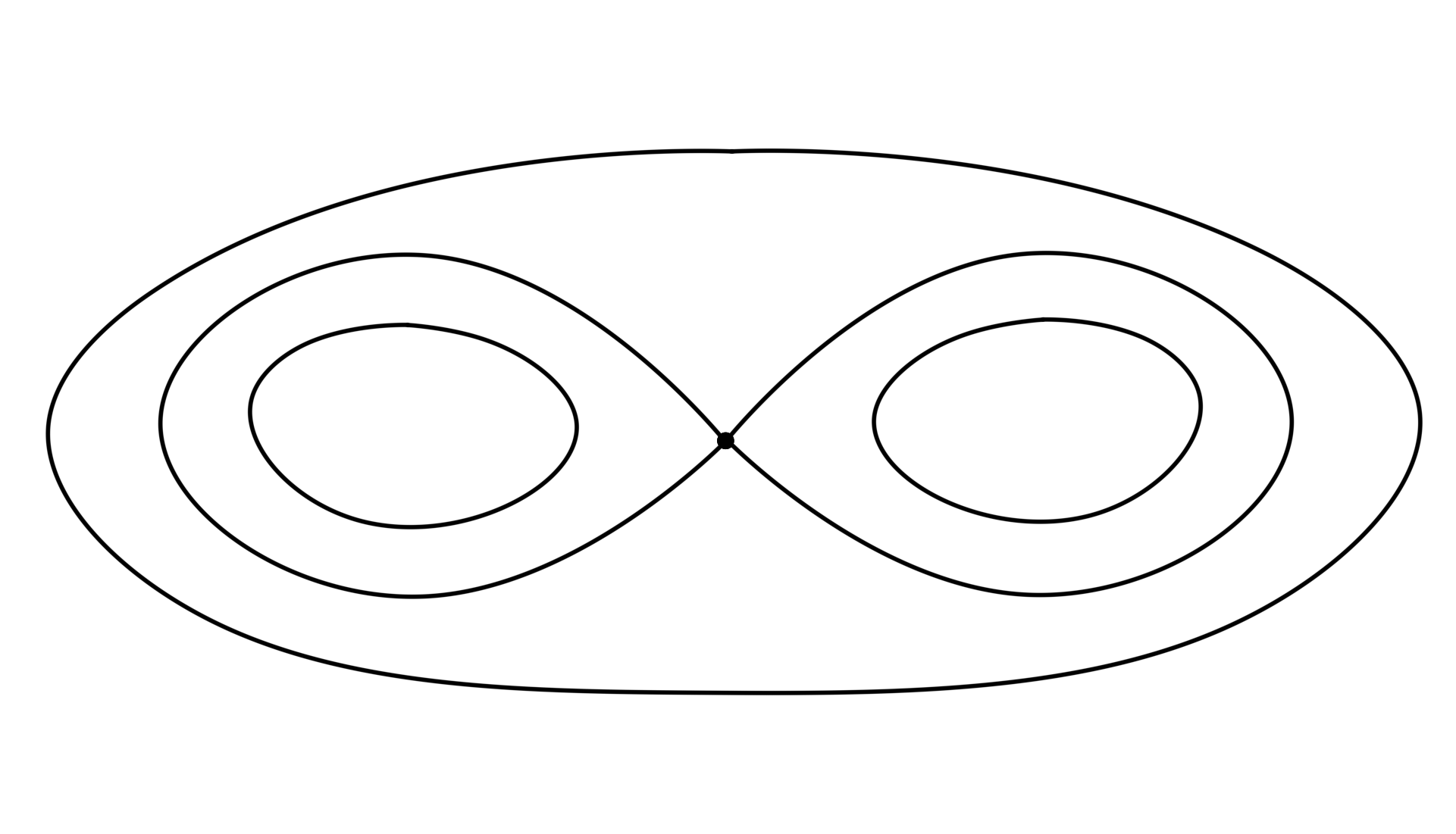}};
\end{tikzpicture}
\end{center}

\caption{Level sets of $h$}
\label{fig:level}
\end{figure}
 
 In all cases, the vector field is volume preserving and Lemma \ref{eul} concludes.
\end{proof}

Combining this result with the interpolation lemma, we can realize any configuration graph as Figure \ref{fig:graph}, and so any possible topological configuration is realized by an Arnold flow. We also state it in the general case of a molecule with gluing coefficients. 
\begin{theorem}\label{Mole}
Given a graph with blocks $I-V$ and Dehn coefficients, there exist an Arnold flow whose Morse-Bott Bernoulli function realizes it. In general, given a molecule with gluing coefficients there is an Arnold flow realizing it.
\end{theorem}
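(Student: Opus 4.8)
The strategy is to assemble the Arnold fluid from the building blocks provided by Proposition~\ref{ArnBl}, gluing them along their torus boundaries exactly as prescribed by the given molecule, and using the Interpolation Lemma (Lemma~\ref{int}) to glue the local pairs $(X,\alpha)$ smoothly while keeping the Bernoulli function regular on the gluing collars. This is essentially the same scheme used in the proof of Theorem~\ref{main}: the molecule tells us which block sits at each vertex and which Dehn coefficient to use on each edge; we place a copy of the appropriate block with its fluid from Proposition~\ref{ArnBl}, and on a collar $T^2\times[1,2]$ around each gluing torus we run Lemma~\ref{int}.

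\medskip
\noindent\textbf{Key steps, in order.}
First, normalize the blocks so that near each boundary torus the pair has the standard form $X=\partial_\theta$, $\alpha=t\,d\theta$ (after rescaling $\alpha$ by the positive function $v$, which only amounts to choosing $t$ as a new coordinate); this is exactly the form in which the blocks of Proposition~\ref{ArnBl} were constructed, since in each case the one form near the boundary is $v(\cdot)d\theta$ with $v>0$ and we may take $v$ to be the collar coordinate. Second, for each edge of the molecule, with gluing map a Dehn twist of matrix $\begin{pmatrix} p & q \\ m & n\end{pmatrix}$, transport the standard pair from one side through the twist: as computed in Subsection~\ref{ssec:blocks}, $\partial_{\theta'}$ becomes a constant linear field $A\partial_\theta+B\partial_\varphi$ and $t\,d\theta'$ becomes $t(C\,d\theta+D\,d\varphi)$, with $AC+DB>0$; this puts us exactly in the hypotheses of Lemma~\ref{int}. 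Third, apply Lemma~\ref{int} on the collar to interpolate between this transported pair and the standard pair $(\partial_\theta, t\,d\theta)$ on the other side, obtaining a globally defined $(X,\alpha)$ with $\alpha(X)>0$, $\iota_X d\alpha=-dB$, and $B$ regular on every collar; the only critical level sets of $B$ are those coming from the blocks themselves — a stable periodic orbit per type~$I$, a critical torus per type~$II$, a figure-eight$\times S^1$ with a saddle circle per type~$III$ and~$IV$ (orientable resp.\ non-orientable separatrix), and a critical Klein bottle per type~$V$ — which reproduces precisely the Fomenko molecule we started with. Fourth, check volume preservation block-by-block and collar-by-collar as in Subsection~\ref{ss:volcrit} (each piece carries an obvious invariant volume because $X$ has locally constant coefficients in the relevant coordinates), patch these into a global volume, and invoke Lemma~\ref{eul} to conclude $X$ is a steady Euler flow for some metric with Bernoulli function $B$. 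Finally, note $B$ has only critical points of type $\pm x_i^2+\text{const}$ (Morse--Bott along circles, tori, or Klein bottles) so Theorem~\ref{An} upgrades $B$ to an analytic function, giving an Arnold fluid. The last sentence of the statement (arbitrary molecules with gluing coefficients) is then the observation that the only constraint on which molecules arise is admissibility, and the construction above accepts any admissible combinatorial data.

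\medskip
\noindent\textbf{Main obstacle.} The delicate point is the bookkeeping at vertices of valence two or three, where a block has more than one boundary torus and the framings on the different boundary components must be chosen consistently so that the Dehn coefficients read off from the molecule match the actual transition maps; one must verify that the natural family of framings described in \cite[Section~4.1]{F1} is the same family implicitly used when normalizing the blocks, so that the interpolation on each collar really does realize the prescribed coefficient and no extra twist is introduced. A secondary subtlety is that, for type~$III$ and~$IV$ blocks, the three (resp.\ two plus the twist) boundary tori are level sets $\{h=\text{const}\}$ of the Duffing-type Hamiltonian, so the "increasing/decreasing outwards" orientation data in the molecule must be matched with the sign $\pm$ chosen in Proposition~\ref{ArnBl}; this is handled by the last clause of that proposition, but it has to be tracked edge by edge so that the direction of increase of $B$ is globally coherent along the graph. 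Neither issue is conceptually hard, but both require care to state cleanly, and they are the reason the detailed treatment of general $3$-atoms is deferred to the Appendix.
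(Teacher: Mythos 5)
Your proposal is correct and follows essentially the same route as the paper: blocks from Proposition~\ref{ArnBl} at the vertices, Dehn twists on the edges, the Interpolation Lemma~\ref{int} on collars, block-by-block volume preservation, and Lemma~\ref{eul} to conclude. The only organizational difference is that the paper handles the "direction of increase" bookkeeping you flag by an explicit induction from the minima upward, choosing the constants $K$ and $C$ in each new block so that its minimal Bernoulli value exceeds the maximal value already attained, which makes the monotonicity along edges automatic.
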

\begin{proof}
As in the whole section, we restrict to graphs with simple atoms, i.e. blocks of the form $I-V$, and leave for the appendix the general case.

Take a graph with Dehn coefficients and oriented edges. Each vertex indicates the type of block (and hence of the neighborhood of some connected component of a critical level set) of the Bernoulli function. The amount of up-directed edges for type $III$ blocks indicates if in the interior boundary components or in the exterior one the Bernoulli function is increasing. We start from the bottom and construct in the minima blocks an Arnold flow using Proposition \ref{ArnBl}. We proceed by induction.

Assume we have an Arnold flow in a manifold with boundary $N$ realizing a subgraph of the given marked molecule. Denote by $B$ the Bernoulli function in $N$. In a neighborhood of a torus boundary component of some of its blocks, there exist coordinates $(t,\theta)$ such that the one-form is $\alpha=td\theta$. We attach the following block, that we assume to be of type $III$ or $IV$, via a Dehn twist with the coefficients indicated by the edge of the graph. Using Proposition \ref{ArnBl}, we endow the block with an Arnold flow. Up to choosing well the constants $K$ and $C$ in Proposition \ref{ArnBl}, we can make sure that the minimal value of the Bernoulli function is higher than the maximal value of $B$ in $N$. Denote by $A$ and $B$ the maximal value of $B$ in $N$ and the minimal value of the Bernoulli function in the new block. Hence in a neighborhood of the gluing locus $U(T^2) \cong T^2 \times [A,B]$ we can assume that in each boundary component $T^2 \times \{A\}$ and $T^2 \times \{B\}$ there are respectively defined Arnold flows coming from $N$ and the glued block. We are in the hypotheses of the interpolation Lemma \ref{int}, since the vector fields are always linear in the torus boundaries. In a neighborhood $U(T^2)$, we obtain a globally defined non-vanishing vector field $X$, and a one-form $\alpha$ such that $\alpha(X)>0$ and $\iota_X d\alpha=-dB'$. Here $B'$ is a function which coincides with $B$ in $N$ except at the neighborhood where we applied the interpolation lemma. 

The cases of attaching the last blocks of type $I$, $II$ or $V$ containing a maximum of the Bott function are done analogously.

After iterating this process with all the edges of the graph, we obtain on a closed manifold a globally defined vector field, one-form, and Bernoulli function realizing the given graph. One easily checks that $X$ globally preserves some volume form as in \ref{ss:volcrit}. This proves, by Lemma \ref{eul}, there is some Arnold flow realizing the initial graph with coefficients.

In the general case, when we can have more than one critical circle in the same connected component of the critical level set, the neighborhood of a singular leaf is a $3$-atom as described in \cite{F1}. In the Appendix we explain how to construct an Arnold flow in the case of an arbitrary $3$-atom, in the sense of any possible foliation around a singular leaf. Then using the interpolation lemma as above proves that given any graph with arbitrary $3$-atoms of any complexity as vertices, there is an Arnold flow realizing it. 
\end{proof} 

\begin{remark}
By the same arguments as in the previous section, we can in fact construct a Bernoulli function that is analytic.
\end{remark}

Theorem \ref{ArnTop} stated in the introduction is just a reformulation of the realization of any marked molecule. However, in the previous Theorem we didn't fix a volume form a priori. A simple application of Moser's path method makes it possible to prescribed the volume form. 

\begin{lemma}
Let $X$ be a steady Euler flow with Bernoulli function $B$ on $(M,g)$. Let $\mu'$ be any volume form. Then there is some other metric $g'$ such that $X$ is a steady solution to the Euler equations with Bernoulli function $B$ (up to diffeomorphism) and induced Riemannian volume form $\mu'$.
\end{lemma}
\begin{proof}
Denote by $\mu$ the volume induced by $g$, which is preserved by $X$.  Up to multiplying $\mu$ by a constant, Moser's path method shows that there is a diffeomorphism $\varphi:M \rightarrow M$ (actually an isotopy) such that $\varphi^*\mu=\mu'$. In particular, the vector field $Y=\varphi^*X$ preserves $\varphi^*\mu=\mu'$. On the other hand, if $\alpha$ denotes $g(X,\cdot)$, we know it satisfies $\iota_Xd\alpha=-dB$. We deduce that $\beta=\varphi^*\alpha$ satisfies
$$ \iota_{\varphi^*X}d\varphi^*\alpha=-d\varphi^*B. $$
To conclude, we construct a metric such that $g'(Y,\cdot)=\beta$ and $\mu'$ is the induced Riemannian volume using Lemma \ref{eul}. We conclude that $Y$ satisfies the stationary Euler equations in $(M,g')$ with Bernoulli function $\varphi^*B$ and induced volume form $\mu'$.
\end{proof}
\subsection{Arnold flows as integrable systems}

In this last subsection, we will show how the Arnold flows previously constructed, together with their vorticity vector fields, can be interpreted as an integrable Hamiltonian system. 

\begin{theorem}\label{maincor}
The constructed steady Euler flows together with their vorticity in Theorem \ref{ArnTop} can be realized as the isoenergy hypersurface of a Hamiltonian system with a Bott integral (the Bernoulli function) in a symplectic manifold with boundary. Additionally, the Hamiltonian vector field is, up to rescaling, the Reeb field of a stable Hamiltonian structure.
\end{theorem}

We will need a one-form with some properties. Recall that a stable Hamiltonian structure is a pair $(\alpha,\omega)$ of a one-form and a two form such that $\alpha\wedge \omega>0$ and $\ker \omega \subset \ker d\alpha$. The equations $\iota_R \omega=0$ and $\alpha(R)=1$ uniquely define the Reeb field of the stable Hamiltonian structure. It was already proved in \cite{CV} that any non-vanishing steady solution to the Euler equations with non-constant analytic Bernoulli function can be rescaled to the Reeb field of a stable Hamiltonian structure. We will construct a stable Hamiltonian structure with a very precise property, which will ensure that the vorticity field corresponds to the Hamiltonian vector field of the additional first integral.

For some rescaling of the solutions (with $B$ of Morse-Bott type and eventually analytic) we can explicitely construct a one-form $\beta$ satisfying the condition $\iota_X d\beta=0$ and $\beta(X)>0$ and that it vanishes when evaluated on the curl of $X$.

\begin{lemma}\label{beta}
In Theorem \ref{ArnTop} denote by $Y$ the curl (for the constructed metric) of the steady flow $X$. Then, there exist a one-form $\beta$ such that
\begin{itemize}
\item $\beta(X)>0$,
\item $\iota_{X} d\beta=0$,
\item $\beta(Y)=0$.
\end{itemize}
\end{lemma}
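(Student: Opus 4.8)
The plan is to write down $\beta$ by hand, block by block, following the decomposition from which the Arnold fluid of Theorem~\ref{ArnTop} (equivalently Theorem~\ref{Mole}) is assembled. Two preliminary reductions. Let $\alpha$ denote the global one-form dual to $X$ for the constructed metric, so that $\iota_Xd\alpha=-dB$, $\alpha(X)>0$, and $\iota_Y\mu_g=d\alpha$ with $\mu_g$ the Riemannian volume. For any one-form $\beta$ one then has $\beta(Y)\,\mu_g=\beta\wedge\iota_Y\mu_g=\beta\wedge d\alpha$, so the third condition $\beta(Y)=0$ is equivalent to the purely differential condition $\beta\wedge d\alpha=0$. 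Moreover the first two conditions say precisely that $(\beta,\iota_X\mu_g)$ is a stable Hamiltonian structure whose Reeb field is $X/\beta(X)$; so producing such a $\beta$ also re-derives, explicitly, the rescaling statement of \cite{CV}, now with the extra vanishing.

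Now I define $\beta$. On the interior of each elementary block of type $I$--$V$ in Proposition~\ref{ArnBl} the fluid is in the model form $X=\partial_\theta$, $\alpha=v\,d\theta$, where $v$ is a function of the transverse coordinates and $\theta$ the circle coordinate that $X$ flows along (the globally defined mapping-torus coordinate for types $IV$ and $V$). There I set $\beta:=d\theta$; then $\beta(X)=1>0$, $d\beta=0$, and $\beta\wedge d\alpha=d\theta\wedge dv\wedge d\theta=0$, so the three conditions hold --- and the same is true at the auxiliary critical torus of the graph-manifold construction, where $\alpha=t^2d\theta$ and $X=\partial_\theta$. On each gluing collar $T^2\times[1,2]$ produced by the Interpolation Lemma~\ref{int}, the fluid has $X$ a $t$-dependent linear field on the $T^2$ factor and global one-form $\alpha=t\alpha_0$ with $\alpha_0=C(t)\,d\theta+D(t)\,d\varphi$; writing $d\alpha=dt\wedge\sigma$ with $\sigma=(tC)'\,d\theta+(tD)'\,d\varphi$, I set $\beta:=\sigma$. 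Then $\beta\wedge d\alpha=\sigma\wedge dt\wedge\sigma=0$ because $\sigma\wedge\sigma=0$; contracting $d\alpha=dt\wedge\sigma$ with $X$ and using $dt(X)=0$ gives $\sigma(X)=B'(t)$, which is nowhere zero by Lemma~\ref{int} and which the construction arranges to be positive, so $\beta(X)>0$; and $d\sigma=dt\wedge(\partial_t\sigma)$ gives $\iota_Xd\sigma=-(\partial_t\sigma)(X)\,dt$, which vanishes because in each of the seven steps of Lemma~\ref{int} the component of $\alpha_0$ being modified lies along a direction in which $X$ has no component --- this is exactly the feature of that step-by-step construction that keeps $B$ critical-point-free on the collar. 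Hence $\iota_Xd\beta=0$ there as well.

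Finally these local pieces glue to a single smooth one-form: near $\{t=1\}$ the collar fluid is $(\partial_\theta,t\,d\theta)$, so $\sigma=d\theta$ and agrees with the adjacent block's $\beta=d\theta$, and near $\{t=2\}$ it is $(Y,\gamma)$ with $\gamma$ the image under the prescribed Dehn twist of the next block's circle-coordinate form, so $\sigma=\gamma$ there, again agreeing. Thus $\beta$ is globally defined with the three asserted properties, and by Theorem~\ref{An} everything can be taken analytic. The case of arbitrary $3$-atoms (the Appendix) is identical, since there too $X=\partial_\theta$, $\alpha=v\,d\theta$ over a surface, so $\beta=d\theta$ works on the atom and $\beta=\sigma$ on the interpolation collars. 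I expect the only point requiring genuine care to be the identity $(\partial_t\sigma)(X)=0$ on the collars --- i.e.\ verifying that the apparently ad hoc seven-step interpolation of Lemma~\ref{int} really keeps $X$ blind to the $t$-variation of $\alpha_0$; everything else is bookkeeping with exterior products and with the Dehn-twist coordinate changes.
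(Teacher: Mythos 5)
Your construction is correct and is essentially the paper's own proof: $\beta=d\theta$ on each block (where $\beta(Y)=0$ follows from $d\alpha=df\wedge d\theta$), and on the interpolation collars your closed formula $\beta=\sigma$ with $d(t\alpha)=dt\wedge\sigma$ reproduces, interval by interval, exactly the seven-case table of one-forms written out in the paper, with the three properties verified there by direct computation of $Y$ rather than via the identities $\beta(Y)\mu=\beta\wedge d\alpha$, $\sigma(X)=h'$ and $(\partial_t\sigma)(X)=0$. The one point you flag as needing care --- that in each of the seven steps the $t$-varying component of $\sigma$ annihilates $X$ --- does indeed hold by the design of Lemma \ref{int}, so the argument goes through.
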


\begin{proof}
Let us keep the simplifying assumption that the marked molecule has only simple atoms i.e. blocks of type $I-V$. We have a solution as constructed in Theorem \ref{ArnBl}: a given Arnold flow in each block using Proposition \ref{ArnBl}, and interpolations in each gluing locus using Lemma \ref{int}.

For a type $I$ block of the form $S^1\times D^2$ with coordinates $(\theta,r,\varphi)$, the preserved volume form is $\mu=rdr\wedge d\varphi\wedge d\theta$. For the type $II$ block, the volume form is $dt\wedge d\varphi\wedge d\theta$ for coordinates in $T^2 \times [1,2]$. In type $III$ and $IV$, the volume form is $\mu_N\wedge d\theta$, where $\mu_N$ is induced by an area form in the disk with two holes and $\theta$ a coordinate of the mapping torus. Finally, type $V$ block has as volume form $\mu_s\wedge d\theta$ where $\mu_S$ is induced by an area form in $S^1 \times [-1,1]$. We want to study the curl $Y$ of the solutions constructed in Proposition \ref{ArnBl}. In cases $I$ and $II$, the curl is of the form $Y=H(r) \pp{}{\varphi}$. In the three remaining cases, the curl equation writes
\begin{align}
\iota_Y \mu &= d\alpha \\
 			&= df \wedge d\theta,\label{base}
\end{align} 
where $f$ is some Morse-Bott function that coincides with the Bernoulli function up to a constant. We shall prove that the form $d\theta$ satisfies $d\theta(Y)=0$. Indeed, recall that $f$ is a first integral of $Y$. Assume that there is a point, hence an open subset $U$ of the block, where $d\theta(Y)\neq 0$. Using equation \eqref{base}, we deduce that
 $$\iota_Y \iota_Y \mu=\iota_Y df\wedge d\theta= -d\theta(Y)df,$$
and since $f$ is Morse-Bott there are regular points in $U$ where $\iota_Y\iota_Y\mu\neq 0$, reaching a contradiction . In conclusion, the one-form $\beta=d\theta$ satisfies $\beta(X)>0$, $\iota_X d\beta=0$ and $\beta(Y)=0$. \\

Finally, we only need to extend $\beta$ in the interpolation areas. In any of the applications of the interpolation lemma we did in Theorem \ref{ArnTop}, observe that in the boundary $U=[1,2]\times T^2$ we have $\beta|_{t=1}=d\theta$ and $\beta|_{t=2}= Cd\theta +Dd\varphi$. Let us prove that in an arbitrary interpolation, we can find a $\beta$ satisfying the boundary conditions and the required conditions inside. For the volume form $dt\wedge d\theta \wedge d\varphi$, which is preserved by $X$ and extends as a globally preserved volume form, we can compute the curl of $X$. It satisfies the condition $\iota_Y \mu=d(t\alpha)$. By writing such an equation in every interval (1)-(7) of Lemma \ref{int}, we find the following expression of $Y$.

\begin{equation*}
Y=
\begin{cases}
\pp{}{\varphi} - [tH_1'+H_1] \pp{}{\theta} \qquad &t\in I_1 \\
\pp{}{\varphi} - \pp{}{\theta} \qquad &t\in I_2\\
\pp{}{\varphi} - \pp{}{\theta} \qquad &t\in I_3\\
[1+H_4(C-1)+tH_4'(C-1)]\pp{}{\varphi} - \pp{}{\theta}\qquad &t\in I_4\\
C\pp{}{\varphi} - \pp{}{\theta} \qquad &t\in I_5\\
C\pp{}{\varphi} - [1+H_6(D-1)+tH_6'(D-1)]\pp{}{\theta} \qquad &t\in I_6\\
C\pp{}{\varphi}- D\pp{}{\theta}\qquad &t\in I_7
\end{cases}
\end{equation*}
Hence, we define $\beta$ as the following one-form.
\begin{equation*}
\beta= 
\begin{cases}
 d\theta + [tH_1'+H_1] d\varphi \qquad &t\in I_1 \\
d\theta + d\varphi \qquad &t\in I_2\\
d\theta + d\varphi \qquad &t\in I_3\\
[1+H_4(C-1)+tH_4'(C-1)]d\theta + d\varphi \qquad &t\in I_4\\
Cd\theta+d\varphi \qquad &t\in I_5\\
Cd\theta + [1+H_6(D-1)+tH_6'(D-1)]d\varphi \qquad &t\in I_6\\
Cd\theta + Dd\varphi\qquad &t\in I_7
\end{cases}
\end{equation*}
Such one-form clearly satisfies $\beta(Y)=0$. Furthermore, looking at the expression of $X$ in Lemma \ref{int}, we have $\beta(X)>0$ and $\iota_Xd\beta=0$.
\end{proof}

 The computations in the proof of the interpolation Lemma \ref{int} were adjusted so that one can easily use them to construct $\beta$ above.\\

Let $M$ be a graph manifold and $X$ an Arnold flow constructed as in Theorem \ref{main}, so that for some metric $g$ we have 
\begin{equation*}
\begin{cases}
\iota_X d\alpha&=-dB \\
d\iota_X\mu&=0
\end{cases}
\end{equation*}
where $\alpha=g(X,\cdot)$ and $\mu$ is the Riemannian volume. Denote by $Y$ the curl of $X$ with respect to $g$. By Lemma \ref{beta} we know that there is a one-form $\beta$ such that $\beta(X)>0$, $\beta(Y)=0$ and $\iota_X d\beta=0$. 

\begin{remark}
Note that for this $\beta$, the vector field $X$ is a rescaling of the Reeb field of the stable Hamiltonian structure $(\beta, \iota_X\mu)$. In particular, this shows that any ``admissible" Morse-Bott integral can be realized as the integral of a stable Hamiltonian structure.
\end{remark}

Consider in $M\times \R$, with coordinate $t$ in the second component, equipped with the two form
$$ \omega= dt \wedge \beta +td\beta + \iota_X \mu. $$
For $t$ small enough, it is a symplectic form. This is in fact the symplectization (cf. \cite{CV2}) of the stable Hamiltonian structure $(\beta,\iota_X\mu)$. The Euler flow $X$ and its curl can be seen as some Hamiltonian system with Bott integral in the symplectic manifold $(M\times [-\varepsilon,\varepsilon],\omega)$ .

\begin{proposition}\label{sym}
The pair $F=(t,-B)$ defines an integrable system in $M\times [-\varepsilon,\varepsilon]$. The Hamiltonian vector fields of $t$ and $B$ restricted to $M\times \{0 \}$ are respectively $X$ and its curl. 
\end{proposition}

\begin{proof}

The vector field $X$ satisfies that
$$ \iota_X \omega= -dt+t\iota_X d\beta= -dt,$$
which implies that $X$ is the Hamiltonian vector field of the function $H=t$. Furthermore, contracting $Y$ with the symplectic form we obtain
$$ \iota_Y \omega= \iota_Y \iota_X \mu + t\iota_Yd\beta. $$
Recall that $Y$ satisfies that $\iota_Y\mu=d\alpha$, so we have $\iota_Y\iota_X \mu= -\iota_X d\alpha=dB$. If $X_B$ denotes the Hamiltonian vector field of the function $-B$, we have $X_B|_{t=0}=Y$.

It remains to check that $F=(t,-B)$ defines an integrable system. Clearly $dt\wedge dB \neq 0$ almost everywhere, since $dB$ vanishes only on zero measure stratified sets of positive codimension. Furthermore, we have
\begin{align*}
\omega(X_t,X_{B})&=-\omega(X_B,X_t)\\
				&= -\iota_{X_B}\omega (X_t) \\
				&= -dB(X) \\
				&=0.
\end{align*}
The last equality follows from the first Euler equation: the fact that $\iota_X d\alpha=-dB$. Hence the two vector fields commute with respect to the symplectic form and $(t,-B)$ defines an integrable system.
\end{proof}

We obtain an alternative proof that any topological configuration of a Bott integrable system can be realized by some integrable system, with the additional property that the Hamiltonian vector field is, up to rescaling, the Reeb field of a stable Hamiltonian structure (Theorem \ref{maincor}). The realization theorem for Bott integrable systems was originally proved by Bolsinov-Fomenko-Matveev \cite{BFM}.

Proposition \ref{sym} unveils an example of an explicit (and expected) relation between Arnold's structure theorem and the classical Arnold-Liouville theorem in the theory of integrable systems. However the symplectization procedure to obtain integrable systems is an \emph{ad hoc} construction. In general, for a non-vanishing flow with an analytic or even Morse-Bott Bernoulli function, it is not possible to find a one-form as in Lemma \ref{beta}. 

In a point of the critical set of the Bernoulli function, we have $\iota_Xd\alpha=0$ and $\iota_Yd\alpha=0$. This implies that either $d\alpha$ vanishes and so does $Y$, or $Y$ is non-vanishing and parallel to $X$. It is clear that in the second case one cannot find a one-form such that $\beta(X)>0$ and $\beta(Y)=0$. It is possible to find examples where this happens, using Example 4.4 in \cite{KKP}.

\begin{example}
Consider the three torus $T^3$ with the standard metric on it $g=d\theta_1^2+ d\theta_2^2 +d\theta_3^2$. We take the volume preserving vector field 
$$ X= \sin^2 \theta_3 \pp{}{\theta_1} + \cos \theta_3 \pp{}{\theta_2}, $$
which is tangent to the tori obtained by fixing the third coordinate. The curl of $X$ is given by $Y=\sin \theta_3 \pp{}{\theta_1} + 2\sin \theta_3 \cos \theta_3 \pp{}{\theta_2}$. The dual form to $X$ is $\alpha= \sin^2 \theta_3 d\theta_1 + \cos \theta_3 d\theta_2$, from which we can deduce that the analytic Bernoulli function is $B=\frac{1}{2}(\sin^4\theta_3 + \cos^2\theta_3)$. Along the torus $\theta_3=\pi/2$, the derivative of the Bernoulli function vanishes. However, both $X$ and $Y$ are non-vanishing and parallel. Hence in such example one cannot find a one-form as in Lemma \ref{beta}.
\end{example}

We can also construct an example with a Morse-Bott Bernoulli function.

\begin{example}
Consider the solid torus as in the block of Section \ref{ssec:blocks}. Take coordinates $(\theta,x,y)$ in $S^1\times D^2$, and denote by $(r,\varphi)$ polar coordinates in $D^2$. We consider the one-form
$$ \alpha= (r^2+\varepsilon)d\theta + \varphi(r)xdy, $$
where $r=x^2+y^2$, the function $\varphi(r)$ is constantly equal to $1$ close to $0$ and equal to $0$ for $r\geq \delta$. The vector field will still be $X=\pp{}{\theta}$ and the volume form is $\mu=rdr\wedge d\varphi \wedge d\theta=dx\wedge dy \wedge d\theta$. We have
$$ d\alpha= 2rdr\wedge d\theta + (\pp{\varphi}{x}2x^3+\varphi)dx\wedge dy. $$
As before, we have $\iota_Xd\alpha=d(r^2)$, so the Bernoulli function is Morse-Bott $B=r^2$. However, constructing a metric with Lemma \ref{eul}, the curl of $X$ is no longer $\pp{}{\varphi}$. For $r>\delta$, we have $Y=\pp{}{\varphi}$. For $r$ very close to $0$ we have $\varphi(r)=1$ and hence $d\alpha=2rdr\wedge d\theta + dx\wedge dy$.  For such a form, the curl of $X$ is 
$$ Y=2\pp{}{\varphi} + \pp{}{\theta}, $$
which does not vanish at $r=0$. The construction in the solid torus can be extend to a compact manifold using our constructions in Section \ref{sec:arn}.
\end{example}

In both cases the one-form $\beta$ cannot be constructed, and in fact these steady Euler flows cannot be seen as integrable systems, as long as we ask the natural compatibility conditions that $X$ and $Y$ are respectively the Hamiltonian vector fields of the integrals $t$ and $B$ (or $-B$). Indeed if $Y$ was the Hamiltonian vector field of $B$, it should always vanish at the critical points of $B$, since it would be defined by the equation $\iota_Y \omega=-dB$ for some symplectic form $\omega$.

\section{Singular Morse-Bott Arnold flows}

In this last section, we study the most general case of Euler flows with a Morse-Bott Bernoulli function. Those are singular Arnold flows, and by singular we mean that we allow the vector field $X$ to have stagnation points. We will prove that these steady solutions do not exist in non-graph manifolds, generalizing Theorem \ref{top} in the Morse-Bott case by dropping the non-vanishing assumption.

\subsection{Critical sets of the Bernoulli function}

Let us first analyze the level sets of a Morse-Bott Bernoulli function. A first lemma is the non-existence of non-degenerate critical points.

\begin{lemma}\label{lem:nosad}
Let $X$ be a steady Euler flow with smooth Bernoulli function $B$. Then $B$ does not have any non-degenerate critical point.
\end{lemma}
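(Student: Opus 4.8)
The plan is to argue by contradiction at a non-degenerate critical point $p$ of $B$. Near $p$ the steady Euler equations give $\iota_X d\alpha = -dB$ and $\iota_X dB = 0$, so $X_p$ lies in $\ker dB_p$. Since $p$ is a non-degenerate critical point of $B$ on a three manifold, $dB$ vanishes at $p$ but its Hessian is non-degenerate; in particular $dB \ne 0$ on a punctured neighborhood of $p$, and the level set $\{B = B(p)\}$ near $p$ is a cone (the Morse quadratic form). The key point I want to extract is a contradiction between the topology of this local level set and the fact that $X$ is tangent to it while being, generically, non-vanishing enough to carry flow. Concretely, $X$ is tangent to every regular level set $\{B = c\}$ for $c$ near $B(p)$, each of which is a surface; and since $X$ preserves a volume form $\mu$, the restriction of $X$ to a regular level surface preserves an area form (obtained by contracting $\mu$ with $dB$ suitably).

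First I would set up local coordinates $(x_1,x_2,x_3)$ in which $B = B(p) \pm x_1^2 \pm x_2^2 \pm x_3^2$. The signature has three cases up to symmetry: index $0$ or $3$ (local min/max), and index $1$ or $2$ (saddle). In the extremum case, the nearby regular level sets are spheres $S^2$; $X$ is a volume-preserving (hence area-preserving on the level sphere, after the standard reduction using $\iota_X \mu$ and $dB$) vector field on $S^2$, so by Poincar\'e--Hopf it must vanish somewhere on each such sphere, and letting the level shrink to $p$ forces a whole one-parameter family of zeros accumulating at $p$; this already looks suspicious but is not yet a contradiction, so I would instead examine $d\alpha$ and the equation $\iota_X d\alpha = -dB$ directly at $p$: the right-hand side vanishes at $p$, so $\iota_{X_p} d\alpha_p = 0$. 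If $X_p \ne 0$, then $d\alpha_p$ has $X_p$ in its kernel; combined with $d\alpha$ being a $2$-form on a $3$-manifold (hence always degenerate with a kernel line), one studies the linearization of $X$ against $dB$'s Hessian. The cleanest route: differentiate $\iota_X d\alpha = -dB$ at $p$. If $X_p = 0$, the linearization $A = (DX)_p$ satisfies, upon differentiating, a linear relation $\iota_{Av} d\alpha_p = -\mathrm{Hess}_p B(v,\cdot)$ for all $v$; since $\mathrm{Hess}_p B$ is non-degenerate (rank $3$), the map $v \mapsto \iota_{Av} d\alpha_p$ has rank $3$, forcing $d\alpha_p$ to be non-degenerate on a $3$-dimensional space — impossible for a $2$-form on a $3$-dimensional vector space, whose rank is at most $2$. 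If instead $X_p \ne 0$, restrict to the $2$-plane $\ker dB_p$: a similar linearized computation there yields that a $2$-form must be non-degenerate on a space where a rank-$2$ Hessian lives, and one tracks the remaining direction using that $\iota_{X_p} d\alpha_p = 0$, again producing a rank obstruction.

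So the skeleton is: (i) reduce to local Morse normal form via the structure already available (the same normal-form input used before invoking Theorem~\ref{An}); (ii) split into $X_p = 0$ and $X_p \ne 0$; (iii) in each case differentiate the equation $\iota_X d\alpha = -dB$ once and compare ranks, using that $\mathrm{Hess}_p B$ has full rank $3$ while $d\alpha_p$, being an alternating form on $\R^3$, has rank $\le 2$. The main obstacle I anticipate is the case $X_p \ne 0$: there the naive linearization does not immediately see the full Hessian, because only the tangential part of $dB$ along the flow line is controlled, so I would need to combine the first-order information $\iota_{X_p} d\alpha_p = 0$ with the second-order (Hessian) information transverse to $X_p$, and be careful that the "area-preserving on level sets" reduction is valid near a point where the level set is singular. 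A possible clean workaround for that case is to note that $B$ is a first integral, so $X$ is tangent to $\{B = B(p)\}$; if this set is a cone (index $1$ or $2$) then $X$ must preserve the cone and in particular fix its vertex $p$ if $X$ is continuous and the cone is not a plane, reducing back to the $X_p = 0$ case — which is the one where the rank argument is crisp.
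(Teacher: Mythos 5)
Your proof works, and its core mechanism is a streamlined version of the paper's. The decisive step in both arguments is the same: differentiate $\iota_X d\alpha=-dB$ at the critical point $p$ and play the full rank of $\operatorname{Hess}_pB$ against the degeneracy of the left-hand side. You do this by pure pointwise linear algebra: once $X_p=0$, the linearized identity $d\alpha_p(Av,\cdot)=-\operatorname{Hess}_pB(v,\cdot)$ with $A=(DX)_p$ would force the alternating form $d\alpha_p$ to have rank $3$ on the three-dimensional space $T_pM$, which is impossible. The paper instead first disposes of extrema by noting that nearby regular level components would be spheres rather than tori, and for saddles brings in the vorticity $Y$ (via $\iota_Y\mu=d\alpha$): an order-of-vanishing count in Morse coordinates shows $d\alpha_p\neq0$, hence $Y_p\neq0$, while the first-integral identity $\iota_YdB=0$ forces $Y_p=0$. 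Your route buys uniformity over all Morse indices and needs neither Arnold's theorem nor the curl; the paper's route isolates the geometric fact that the vorticity cannot vanish where $dB$ vanishes only to first order, which is the same rank obstruction in disguise.

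One branch of your write-up needs repair. At a critical point $dB_p=0$, so $\ker dB_p$ is all of $T_pM$ rather than a $2$-plane, and the ``similar linearized computation'' you sketch for the case $X_p\neq0$ does not get off the ground as stated. But that case never occurs and should be eliminated at the outset rather than treated in parallel: differentiating $\iota_XdB=0$ at $p$ gives $\operatorname{Hess}_pB\cdot X_p=0$, hence $X_p=0$ by non-degeneracy. This is exactly what the paper does, and it is the infinitesimal version of your ``the flow fixes the cone vertex'' workaround (which is also valid, and trivial for extrema since there the local level set is the single point $p$). With that observation placed first, your rank argument is the entire proof.
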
  
\begin{proof}
Assume there is a non-degenerate critical point $p$. A simple argument that we describe now rules out the case of a maximum or a minimum \cite{P}. By the Morse lemma there is a local chart $(U,(x,y,z))$ around $c$ such that the function (up to constant) is $B=x^2+y^2+z^2$ or $B=-x^2-y^2-z^2$. But then either $B^{-1}(\varepsilon)$ or $B^{-1}(-\varepsilon)$ is a regular level set diffeomorphic to a sphere. This is a contradiction with Arnold's theorem, which ensures that all regular level sets are tori. It only remains the case of a saddle point. \\

Let $p$ be a saddle point of $B$. Again by the Morse lemma, there are coordinates $(x,y,z)$ such that up to constant we have
$$ B=x^2+y^2-z^2. $$
In these coordinates, we can write 
$$X=X_1\pp{}{x}+X_2\pp{}{y}+X_3\pp{}{z}$$
and 
$$ d\alpha=ady\wedge dz + bdx\wedge dz + cdx\wedge dy, $$
where $X_1,X_2,X_3,a,b,c$ are functions depending on $x,y,z$. The first Euler equation $\iota_Xd\alpha=-dB$ implies
\begin{equation}\label{eq:system}
\begin{cases}
X_1b-X_2a=2z\\
X_1c-X_3a=-2y\\
-X_3b-X_2c=-2x
\end{cases}
\end{equation}

We claim that $d\alpha|_p\neq 0$. Assume the converse, that $d\alpha|_p=0$. Because $B$ is an integral of $X$, the vector field $X$ vanishes at $p$ necessarily. There are several ways to see this. In the coordinates $(x,y,z)$, denote by $B_1,B_2,B_3$ the derivatives of $B$ with respect to $x,y$ and $z$. The fact that $B$ is an integral of $X$ implies that $X_iB_i=0$. Deriving this equation and restricting to the critical point we get
$$(X_1,X_2,X_3) .D^2B (X_1,X_2,X_3)^T|_p=0, $$
where $D^2B$ denotes the Hessian matrix of $B$. This matrix is non-degenerate at $p$ and hence we deduce that $(X_1,X_2,X_3)|_p=0$.

Taking the Taylor expansion of the functions $X_1,X_2,X_3$ and $a,b,c$, they all have a vanishing coefficient of order $0$. In particular, the combinations of the system \eqref{eq:system} yield functions that vanish at least for orders $0$ and $1$. This contradicts the system of equations, since the right side does not vanish at order one.\\

We deduce that $d\alpha|_p\neq 0$. However, we know that the vorticity $Y$ is determined by the equation
$$ \iota_Y \mu=d\alpha,  $$
which implies that $Y|_p\neq 0$. But $B$ is also an integral of $Y$ because $\iota_YdB=-\iota_Y\iota_Xd\alpha=\iota_X \iota_Yd\alpha=0$ and by the previous argument this implies that $Y|_p=0$. We obtain a contradiction, and conclude that a saddle point cannot exist.
\end{proof}

Observe that the previous lemma applies without any assumption on the ambient space or the metric, so any stationary solution to the Euler equations in flat spaces like the Euclidean space $\mathbb{R}^3$ or the flat torus $T^3$ does not admit a Bernoulli function with a non-degenerate critical point. When the ambient manifold is not of graph type, this lemma is enough to prove the non-existence of Bott integrable steady Euler flows. Let us further give a topological characterization of the critical level set of a Morse-Bott Bernoulli function.

\begin{lemma}\label{lem:stru}
Let $X$ be an Euler flow with Morse-Bott Bernoulli function. Let $c$ be a critical submanifold. Then $c$ is either a circle, a torus, or a Klein bottle. If $c$ is a circle of saddle type, denote by $Z$ a regular component of the critical level set containing $c$. Then $Z$ is an orientable finitely punctured surface with finite genus.
\end{lemma}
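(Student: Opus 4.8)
The plan is to read everything off from the Morse--Bott normal forms, using two inputs that are already available: Lemma \ref{lem:nosad} (no non-degenerate critical points) and Arnold's structure theorem, which guarantees that every connected component of a regular level set of $B$ is a $2$-torus. Throughout, $M$ is closed and oriented, as in the rest of the paper.

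First I would settle the classification of $c$. As a critical submanifold of a Morse--Bott function, $c$ is a closed connected submanifold with $\dim c\in\{0,1,2\}$, and the case $\dim c=0$ is a non-degenerate critical point, excluded by Lemma \ref{lem:nosad}. If $\dim c=1$ then $c$ is a circle. If $\dim c=2$, the normal bundle $\nu$ of $c$ has rank one, so the normal Hessian is a (non-degenerate, hence definite) quadratic form on each fibre and $c$ is a local minimum or maximum of $B$. Then a level set $B^{-1}(B(c)\pm\varepsilon)$ close enough to $c$ is the boundary of a tubular neighbourhood of $c$, and each of its components is a torus by Arnold's theorem. If $c$ is two-sided this boundary consists of two copies of $c$, whence $c\cong T^2$; if $c$ is one-sided it is the double cover of $c$ classified by $w_1(\nu)$, which equals $w_1(c)$ because $w_1(M)=0$, i.e.\ the orientation double cover, so $c$ has orientation double cover $T^2$ and is therefore a Klein bottle.

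Now suppose $c$ is a saddle circle. Put $\mathcal L=B^{-1}(B(c))$ and let $W$ be the connected component of $\mathcal L$ containing $c$; it is compact. I would first check that every critical submanifold contained in $W$ is again a saddle circle: a critical torus, Klein bottle, minimal or maximal circle has, by its local normal form $B-B(c)=\pm(n_1^2+\cdots+n_j^2)$, a neighbourhood in which it coincides with $\mathcal L$, hence it is an isolated connected component of $\mathcal L$ and so cannot lie in $W$ unless it equals $c$, which is impossible by dimension. Since $B$ is Morse--Bott on the compact manifold $M$ there are finitely many critical submanifolds, so $W$ contains finitely many saddle circles $c=c_1,\dots,c_k$. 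Let $Z$ be a connected component of $W\setminus\{c_1,\dots,c_k\}$: every point of $Z$ is a regular point of $B$, so $Z$ is a smooth embedded surface without boundary, and $dB$ restricts to a nowhere-zero section of its conormal bundle; since $M$ is oriented this makes $Z$ two-sided, hence orientable.

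It remains to control the ends and the genus. In a Morse--Bott chart around each $c_i$ one has $B-B(c)=x^2-y^2$ (orientable separatrix) or its twisted analogue, so near $c_i$ the set $\mathcal L$ is $\{x=\pm y\}$ times the circle, possibly with a monodromy permuting the four local branches; in either case $W$ meets the boundary of a small tubular neighbourhood $N(c_i)$ of $c_i$ in a finite disjoint union of circles. Hence $\hat W:=W\setminus\bigcup_i\operatorname{int}N(c_i)$ is a compact surface with boundary, and $W\setminus\{c_1,\dots,c_k\}$ is recovered from $\hat W$ by attaching a half-open annular collar to each boundary circle; consequently each component $Z$ is the interior of a compact surface with boundary, i.e.\ an orientable surface of finite genus with finitely many punctures. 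The only real work lies in this last step --- extracting from the Morse--Bott saddle normal form, and in the non-orientable separatrix case keeping track of how the monodromy glues the four branches of $\mathcal L$, that all ends of $Z$ approaching the $c_i$ are annular, so that $Z$ has finite topology; the orientability of $Z$, by contrast, is immediate once one notices that $dB$ coorients it inside the oriented manifold $M$.
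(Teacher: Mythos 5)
Your proposal is correct and follows essentially the same route as the paper: exclude the $0$-dimensional case via Lemma \ref{lem:nosad}, use Arnold's theorem on nearby regular tori to identify a $2$-dimensional $c$ as a torus or Klein bottle, and obtain orientability of $Z$ from the transversality of $dB$ (equivalently $\operatorname{grad}B$) inside the oriented $M$, with compactness giving finite genus and finitely many punctures. Your version merely fills in more detail (the two-sided/one-sided dichotomy, and the annular-ends argument near the saddle circles) than the paper's terser proof.
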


\begin{proof}
By Lemma \ref{lem:nosad}, each critical submanifold $c$ is of dimension one or two. If $c$ is two-dimensional, it has to be a compact surface. Furthermore, the regular level sets in a trivial neighborhood of $c$ must be tori because of Arnold's theorem. This implies that $c$ is either a torus or a Klein bottle. If $c$ is one-dimensional, it is compact and hence a circle.

For the second part of the lemma, denote by $Z$ a $2$-dimensional strata of a critical level set: it is an open embedded surface. The fact that $Z$ is orientable follows from the fact that $dB\neq0$ everywhere in $Z$ and is transverse to it. Then the gradient of $B$, which satisfies $g(\operatorname{grad}B,\cdot)=dB$, is a vector field everywhere transverse to $Z$. This implies that $Z$ is an open, orientable surface. By compactness, it has a finite amount of punctures (approaching the critical circles) and has finite genus.
\end{proof}

\subsection{Non-existence of Bott integrable steady Euler flows}

We proceed with the proof of Theorem \ref{thm:sing}.

\begin{proof}[Proof of Theorem \ref{thm:sing}]

We will first show that a stratified Bernoulli function has necessarily a non-empty $0$-strata if $M$ is not of graph type. A function is stratified \cite{EG0} if its critical values are isolated and the critical level sets are Whitney stratified sets of codimension greater than zero. This includes both analytic and Morse-Bott functions. The claim follows easily from the theory of tame functions introduced in \cite{FM}.

Assume that there are no $0$-strata. The $1$-strata are necessarily critical circles, by compactness. By Arnold's theorem, every regular level set is a torus. Then the function $B$ is a tame function in the sense of \cite{FM}, and by \cite[Theorem 3]{FM} $M$ has to be a graph manifold: this is a contradiction.

Hence, if $B$ is a Morse-Bott Bernoulli function of some steady Euler flow on a non-graph manifold, it necessarily has a non-degenerate critical point. By Lemma \ref{lem:nosad}, this is not possible and we conclude that such steady flow cannot exist.
\end{proof}
The Morse-Bott assumption was key in the proof. In the general case of an analytic Bernoulli function, the problem of the existence of integrable steady fluids in non-graph manifolds remains open.
\appendix

\section{$3$-atoms and topological classification}

In this appendix, we introduce the notion of $3$-atom as in \cite{F1}, show how to construct an Arnold flow on an arbitrary $3$-atom and discuss the topological classification of the moduli of Morse-Bott Arnold flows.

Given a non-vanishing vector field with a Morse-Bott integral $F$, we denote by $L$ a critical level set of $F$. We are now in the general case and a single critical level set can have more than one critical circle. An example is given by the level set of the height function in Figure \ref{fig:emb}, where the cylinders of the boundary components of $\Sigma_0$ merge. The level set in the total space $\Sigma_0 \times S^1$ is Figure \ref{fig:double8} times a circle.

\begin{figure}[!ht] 

\begin{center}
\begin{tikzpicture}
     \node[anchor=south west,inner sep=0] at (0,0) {\includegraphics[scale=0.16]{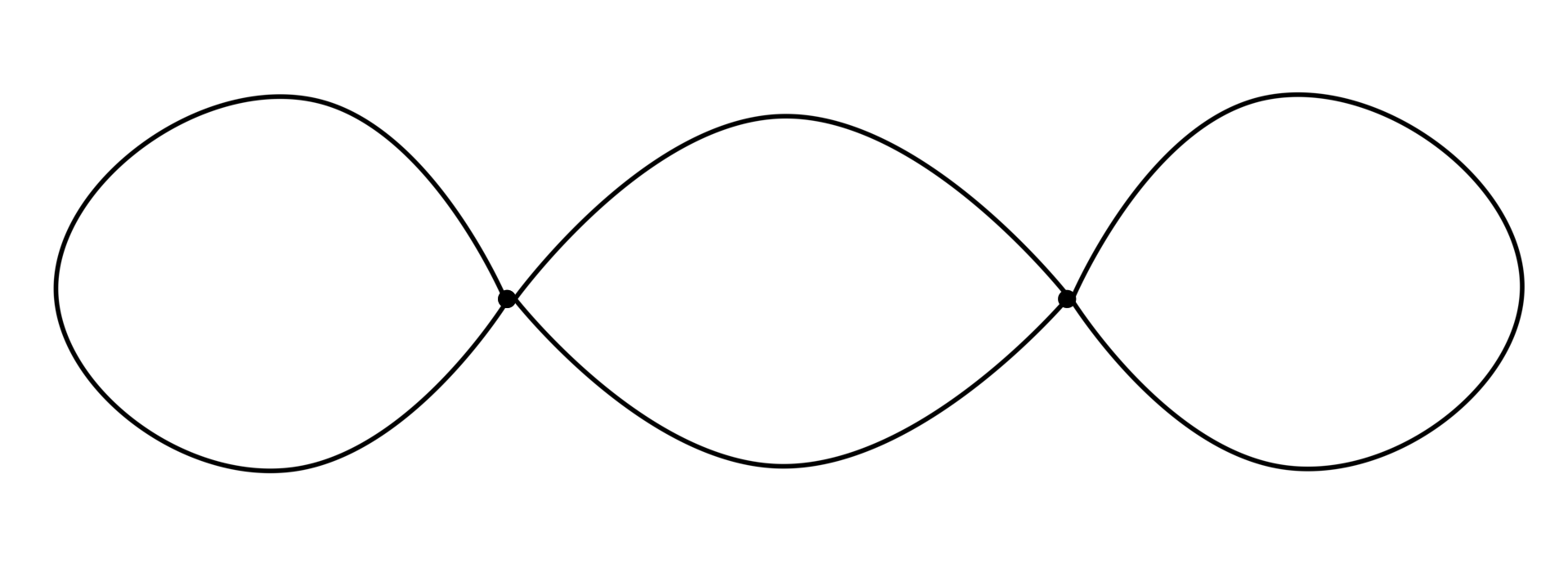}};
     
\end{tikzpicture}
\end{center}
\caption{Non simple $2$-atom}
\label{fig:double8}
\end{figure}

We consider a neighborhood $U(L)$ of $L$ foliated by the function: that is $F^{-1}([c-\varepsilon,c+\varepsilon])$ where $f(L)=c$. We call the space $F^{-1}([c-\varepsilon,c+\varepsilon])$ together with the foliation by level sets a three-atom. Originally, these are considered up to diffeomorphism preserving the foliation and the orientation induced by the flow in the possibly existing critical circles.

It turns out that the topological classification of three-atoms depends on the classification of two-atoms. A two-atom is the neighborhood of a singular level set of a Morse function on a surface. That is, again, $U(L') =f^{-1}([c-\varepsilon,c+\varepsilon])$ where $f$ is a Morse function in a surface and $L'$ a critical level set of $f$. The classification of three-atoms is then the following. A three-atom is always of the form $P^2 \times S^1$ or $P^2 \tilde \times S^1$. Here $P^2$ denotes some two-atom, and the second case is a twisted product that denotes the mapping torus by certain involution $\tau : P^2 \rightarrow P^2$ which preserves the Morse function $f$ inducing the foliation in $P^2$. It follows from the description of an arbitrary three-atom that $f$ glues well with the image of $f$ by the gluing diffeomorphism, and yields a well-defined Morse-Bott function $\tilde f$. The blocks $I, III$ and $IV$ presented in \ref{ssec:blocks2} are the three-atoms in the case where the Bernoulli function only has a single critical circle in the critical level set. Blocks $II$ and $V$ are introduced to take into account the case of critical surfaces.

One can construct, similarly to type $III$ and $IV$ blocks, an Arnold flow in a given $3$-atom using its structure of mapping torus. If we denote by $\theta$ the coordinate in the $S^1$ component, we take as vector field $X=\pp{}{\theta}$. As one-form we take $\alpha= (K+\tilde f) d\theta$, where $K$ is a constant such that $\alpha(X)>0$ everywhere. Finally, take $B=C+\tilde f$ as Bernoulli function for some other constant $C$. We clearly have that $\iota_X d\alpha=-dB$. Given any area form $\omega$ in $P^2$, the area form $\omega + \tau^*\omega$ is invariant by the mapping torus and hence $X$ is volume-preserving for some volume. Lemma \ref{eul} concludes that it is an Arnold flow. The torus boundary components are regular level sets of the Bernoulli function. Hence, one can apply the arguments of the proof of Theorem \ref{ArnTop} that we used for simple atoms in this more general setting. Instead of a graph whose vertices are blocks of type $I-V$, one can have blocks of type $II,III$ and any other possible $3$-atom. It is also immediate to check that the proof of Theorem \ref{maincor} also applies for Morse-Bott function with atoms of arbitrary complexity.  The one-form $\beta$ in Lemma \ref{beta} can be constructed in a given $3$-atom analogously to how it is done for blocks of type $III$ and $IV$. In \cite{F1}, the study of equivalence classes of such more general graphs gives rise to the notion of a marked molecule. Marked molecules classify topologically stable Bott integrable systems. In our setting, we were just interested in the topology of $B$, i.e. the foliation by level sets, and not in the orientations at the critical circles. When we forget about the orientation of the critical circles and drop the topologically stable condition, the classification is also possible in terms of equivalence classes of these graphs (molecules with gluing coefficients). In that case, however, it becomes more technical that with the simplifying assumptions taken in \cite{F1}.

 If we keep track of the orientation of the critical circles and take the simplifying assumption that the orientation induced by the fluid on the critical circles is compatible with that orientation in each critical level set, then the marked molecule is a complete topological invariant of Morse-Bott Arnold flows. 
 
\begin{corollary}
Marked molecules classify topologically the moduli of non-vanishing Euler flows with Morse-Bott Bernoulli function.
\end{corollary}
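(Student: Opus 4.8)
The plan is to deduce this classification statement from the realization result (Theorem \ref{Mole}, and its extension to arbitrary $3$-atoms in this appendix) together with the already-established dictionary between Morse-Bott Arnold fluids and Bott integrable systems. The key observation is that a Morse-Bott Arnold fluid $(X,\alpha,B)$ on a graph manifold $M$ is determined, up to the equivalence we care about, by exactly the same combinatorial data that a topologically stable Bott integrable system is: namely the molecule of the foliation by level sets of $B$, decorated with the gluing (Dehn) coefficients on each edge and the orientation data on the critical circles. So the proof has two halves: \emph{well-definedness} of the marked molecule as an invariant, and \emph{completeness} (no two inequivalent marked molecules come from equivalent fluids, and every marked molecule is realized).

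First I would make precise the moduli space: two Morse-Bott Arnold fluids $(X_1,B_1)$ on $M_1$ and $(X_2,B_2)$ on $M_2$ are equivalent if there is a diffeomorphism $M_1\to M_2$ carrying the oriented foliation by level sets of $B_1$ (with orientation of critical circles induced by $X_1$) to that of $B_2$. Then the marked molecule is an invariant essentially by construction: by Arnold's structure theorem the regular level sets are tori, so away from the critical level sets $B$ foliates $M$ by a circle-fibered family of tori; near each connected critical level set $L$ the neighborhood $U(L)$ is a $3$-atom $P^2\times S^1$ or $P^2\tilde\times S^1$ as recalled above, which is exactly a vertex of the molecule; the edges record how these pieces are glued, and as explained in Section~\ref{ssec:blocks} the natural framings reduce the gluing data to the Dehn coefficients; finally the orientation of each critical circle by the flow of $X$ gives the marks. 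Two equivalent fluids induce the same decomposition up to foliation-preserving, orientation-of-critical-circles-preserving diffeomorphism, which is precisely the equivalence relation defining marked molecules in \cite{F1}; hence they have the same marked molecule.

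For completeness I would argue in both directions. Injectivity: if two such fluids have the same marked molecule, then by the classification of $3$-atoms and the framing discussion their $U(L)$-pieces are pairwise foliation-preservingly diffeomorphic with matching orientations, and the Dehn coefficients being equal means the gluings agree up to isotopy; piecing these diffeomorphisms together (using that on the thick-torus regions any two linear-foliation gluings realizing the same Dehn class are isotopic through such gluings) yields a global diffeomorphism intertwining the oriented foliations — this is the standard Fomenko-Zieschang reconstruction argument applied to the foliation rather than to a symplectic structure. Surjectivity is already done: given any marked molecule, Theorem~\ref{Mole} together with the $3$-atom construction of this appendix produces an Arnold fluid realizing it, and under the simplifying assumption that the flow-orientations on the critical circles in each critical level set are compatible, the construction respects exactly the marked-molecule data. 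Combining injectivity, surjectivity, and well-definedness gives the bijection asserted by the corollary.

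The main obstacle I expect is the injectivity step — specifically, upgrading the piecewise foliation-preserving diffeomorphisms between $3$-atoms to a \emph{global} one compatible with all gluings. This is where one must invoke that the relevant framings are canonical (up to the moves listed in \cite[Section 4.1]{F1}) and that the mapping class group computations underlying the Fomenko-Zieschang theorem carry over verbatim when one tracks only the oriented foliation by level sets instead of the finer data of a stable Hamiltonian or symplectic structure. Since the topological invariants here arise, as noted in the paragraph before the corollary, purely from the neighborhood of the critical set of the Morse-Bott integral — exactly the data that marked molecules encode — this transfer is routine but is the technical heart of the argument; the rest is bookkeeping built on Arnold's structure theorem and Theorem~\ref{Mole}.
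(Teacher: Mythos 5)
Your plan is correct and follows essentially the same route as the paper: the corollary is obtained by combining the realization result (Theorem \ref{Mole}, extended to arbitrary $3$-atoms in the appendix) with the observation that Fomenko's marked-molecule invariants depend only on the oriented foliation by level sets of a Morse-Bott integral of a non-vanishing vector field, so the classification of \cite{F1} transfers verbatim to Arnold fluids; the paper likewise imports the injectivity/reconstruction step from \cite{F1} rather than reproving it. You also correctly identify the same caveat the paper records, namely that completeness is asserted under the simplifying assumption that the flow-induced orientations on the critical circles are compatible within each critical level set.
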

This classification can be compared to \cite{IK}, where vorticity functions of Morse type are topologically classified in the context of the Euler equations in surfaces.\\

\section*{Acknowledgments} The author acknowledges financial support from the Spanish Ministry of Economy and Competitiveness, through the Mar\'ia de Maeztu Programme for Units of Excellence in R\&D (MDM-2014-0445) via an FPI grant. The author is also supported by the AEI grant PID2019-103849GB-I00/ AEI/ 10.13039/ 501100011033, and AGAUR grant 2017SGR932. The author is grateful to Eva Miranda, C\'edric Oms, and Daniel Peralta-Salas for useful conversations, as well as to Alexey Bolsinov for helpful correspondence concerning the topological classification of Bott integrable systems.









\end{document}